\renewcommand{\phi}{\varphi}
\newtheorem{theorem}{Theorem}[section]
\newtheorem{lemma}[theorem]{Lemma}
\newtheorem{corollary}[theorem]{Corollary}
\newtheorem{question}[theorem]{Question}
\newtheorem{defi}[theorem]{Definition}
\newenvironment{emdef}{\begin{defi} \rm}{ \end{defi}}
\newtheorem{exa}[theorem]{Example}
\newenvironment{remark}{\begin{rem} \rm}{ \end{rem}}
\newtheorem{rem}[theorem]{Remark}
\DeclareMathOperator{\Id}{Id}
\DeclareMathOperator{\ran}{ran}
\DeclareMathOperator{\eeq}{\mathbb{E}\mathrm{q}}
\DeclareMathOperator{\num}{\mathbb{N}\mathrm{um}}
\DeclareMathOperator{\dark}{\mathrm{Dark}}
\DeclareMathOperator{\light}{\mathrm{Light}}
\DeclareMathOperator{\Num}{\mathrm Num}
\newcommand{\rel}[1]{\mathrel{#1}}
\title[A note on the category of equivalence relations]{A note on the
category of equivalence relations}
\author[V.~Delle Rose]{Valentino Delle Rose}
\address{Dipartimento di Ingegneria Informatica e Scienze Matematiche\\
Universit\`a Degli Studi di Siena\\
I-53100 Siena, Italy}\email{\href{mailto:valentin.dellerose@student.unisi.it}
{valentin.dellerose@student.unisi.it}}
\author[L.~San Mauro]{Luca San Mauro}
\address{Institute of Discrete Mathematics and Geometry, Vienna University of
Technology, Vienna, Austria}
\email{\href{mailto:luca.sanmauro@gmail.com}{luca.sanmauro@gmail.com}}
\author[A.~Sorbi]{Andrea Sorbi}
\address{Dipartimento di Ingegneria Informatica e Scienze Matematiche\\
Universit\`a Degli Studi di Siena\\
I-53100 Siena, Italy}\email{\href{mailto:andrea.sorbi@unisi.it}
{andrea.sorbi@unisi.it}}
\begin{document}

\begin{abstract}
We make some beginning observations about the category $\eeq$ of
equivalence relations on the set of natural numbers, where a morphism
between two equivalence relations $R,S$ is a mapping from the set of
$R$-equivalence classes to that of $S$-equivalence classes, which is
induced by a computable function. We also consider some full subcategories
of $\eeq$, such as the category $\eeq(\Sigma^0_1)$ of computably enumerable
equivalence relations (called \emph{ceers}), the category $\eeq(\Pi^0_1)$
of co-computably enumerable equivalence relations, and the category
$\eeq(\dark^*)$ whose objects are the so-called dark ceers plus the ceers
with finitely many equivalence classes. Although in all these categories
the monomorphisms coincide with the injective morphisms, we show that in
$\eeq(\Sigma^0_1)$ the epimorphisms coincide with the onto morphisms, but
in $\eeq(\Pi^0_1)$ there are epimorphisms that are not onto. Moreover,
$\eeq$, $\eeq(\Sigma^0_1)$, and $\eeq(\dark^*)$ are closed under finite
products, binary coproducts, and coequalizers, but we give an example of
two morphisms in $\eeq(\Pi^0_1)$ whose coequalizer in $\eeq$ is not an
object of $\eeq(\Pi^0_1)$.
\end{abstract}

\maketitle

\section{Introduction}
In his monograph~\cite{Ershov-russian:Book} Ershov introduces ad thoroughly
investigates the category of numberings. We recall that a \emph{numbering} is
a pair $N=\langle \nu, S \rangle$, where $\nu: \omega \rightarrow S$ is an
onto function. Numberings are the objects of a category~$\num$, called the
\emph{category of numberings}; the \emph{morphisms} from a numbering
$N_1=\langle \nu_1, S_1\rangle$ to a numbering $N_2=\langle \nu_2,
S_2\rangle$ are the functions $\mu: S_1 \rightarrow S_2$ for which there is a
computable function~$f$ so that the diagram\[
\begin{tikzcd}
\omega \arrow{r}{f} \arrow[swap]{d}{\nu_{1}} & \omega\arrow{d}{\nu_{2}}
\\
S_1 \arrow[swap]{r}{\mu} & S_2
\end{tikzcd}
\]
commutes. We say in this case that the computable function $f$ \emph{induces
the morphism~$\mu$}, and we write $\mu=\mu^{N_1,N_2}(f)$.

Now, numberings are equivalence relations in disguise, see our
Theorem~\ref{thm:equivalence} below, where we show that the equivalence
relations on the set $\omega$ of natural numbers can be structured into a
category $\eeq$ which is equivalent to $\Num$. In this paper, we rephrase in
$\eeq$ some of the observations noticed by Ershov about $\Num$, and we
hopefully point out some useful, although simple, new facts about $\eeq$, and
some of its full sabcategories, such as the category $\eeq(\Sigma^0_1)$ of
computably enumerable equivalence relations (these relations are called
\emph{ceers}), the category $\eeq(\Pi^0_1)$ of co-computably enumerable
equivalence relations (called \emph{coceers}), and the category
$\eeq(\dark^*)$ whose objects are the dark ceers and the finite ceers.
Although in all these categories the monomorphisms trivially coincide with
the injective morphisms, we see that in $\eeq(\Sigma^0_1)$ the epimorphisms
coincide with the onto morphisms, but in $\eeq(\Pi^0_1)$ there are
epimorphisms that are not onto. We also observe that $\eeq$,
$\eeq(\Sigma^0_1)$, and $\eeq(\dark^*)$ are closed under finite products,
binary coproducts, and coequalizers. Although we were not able to show that
$\eeq(\Pi^0_1)$ is not closed under coequalizers, we give an example of two
morphisms in $\eeq(\Pi^0_1)$ whose coequalizer in $\eeq$ provides an object
which is not in $\eeq(\Pi^0_1)$, in fact it is properly $\Sigma^0_2$.

The reader is referred to MacLane's textbook for the basics about category
theory. A category $\mathbb{C}$ is given by specifying
$\mathrm{ob}(\mathbb{C})$, i.e. the objects of $\mathbb{C}$, and for any pair
$a,b\in \mathrm{ob}(\mathbb{C})$ one must specify $\mathbb{C}(a,b)$, i.e. the
morphisms from $a$ to $b$. We recall that there is a partial binary operation
$\circ$ on morphisms: if $f\in \mathbb{C}(b,c)$ and $g\in \mathbb{C}(a,b)$
then $g \in f\in \mathbb{C}(a,c)$, and for every object $a$ there is a
special morphism $1_a \in \mathbb{C}(a,a)$: the operation $\circ$ is
associative, and $f\circ 1_a=f$ for every $f \in \mathbb{C}(a,b)$, and $1_a
\circ f=f$ for every $f \in \mathbb{C}(b,a)$. All other relevant notions of
category theory which are used in this paper will be duly introduced when
they are needed. Our basic reference for computability theory is Rogers'
textbook \cite{Rogers:Book}. We use the notations $\langle\_,\_\rangle$ for the Cantor pairing function, and $(\_)_0, (\_)_1$ for its projections. The the Cantor pairing function and its projections are computable (even primitive recursive) functions.

Let $1_\omega$ denote the identity function on $\omega$, i.e.\
$1_\omega(x)=x$. For every natural number $n \ge 1$, let $\Id_n$ be the
equivalence relation $x \rel{\Id_n} y$ if $x-y=nq$ for some integer $q$. Thus
$\Id_n$ has exactly $n$ equivalence classes. Let $\Id$ denote equality, i.e.
$x \rel{\Id} y$ if and only if $x=y$. For any equivalence relation $T$ on
$\omega$, let $\omega_{/T}$ be the set of equivalence classes into which $T$
partitions $\omega$, and let $\nu_T: \omega \rightarrow \omega_{/T}$ be given
by $\nu_T(x)=[x]_T$, where $[x]_T$ denotes the $T$-equivalence class of $T$.

\section{The category of equivalence relations}

If $R, S$ are equivalence relations on the set $\omega$ of natural numbers,
we say that a function $f$ is \emph{$(R,S)$-equivalence~preserving} if
\[
(\forall x,y)[x \rel{R} y \Rightarrow f(x) \rel{S} f(y)].
\]
If $f$ is $(R,S)$-equivalence~preserving, then $f$ induces a well~defined
mapping $\alpha^{R,S}(f): \omega_{/R} \rightarrow \omega_{/S}$ given by
$\alpha^{R,S}(f)([x]_R)=[f(x)]_S$, i.e. $\alpha^{R,S}(f)$ is the unique
mapping $\alpha: \omega_{/R} \rightarrow \omega_{/S}$ such that the diagram
\[
\begin{tikzcd}
\omega
\arrow{r}{f}
\arrow[swap]{d}{\nu_{R}}
&
\omega\arrow{d}{\nu_{S}}
\\
\omega_{/R}
\arrow[swap]{r}{\alpha}
&
\omega_{/S}
\end{tikzcd}
\]
commutes.

In the following we will consider only $(R,S)$-equivalence~preserving
functions that are computable. As $\alpha^{R,T}(g\circ
f)=\alpha^{S,T}(g)\circ \alpha^{R,S}(f)$, and for every equivalence relation
$T$ the identity on $\omega_{/T}$ is $\alpha^{T,T}(1_\omega)$ where
$1_\omega$ is the identity function on $\omega$, we are led to the following
definition:

\begin{emdef}
The \emph{category of equivalence relations} $\eeq$ is defined as follows:
\begin{itemize}
  \item the objects of $\eeq$ are the equivalence relations on $\omega$;
  \item if $R,S\in \textrm{ob}(\eeq)$ then the \emph{morphisms} from $R$ to
      $S$ are the elements of the set $\eeq(R,S)$ consisting of all
      $\alpha: \omega_{/R} \rightarrow \omega_{/S}$ such that
      $\alpha=\alpha^{R,S}(f)$ for some $(R,S)$-equivalence~preserving
      computable $f$.
\end{itemize}
\end{emdef}

\begin{remark}
In the following we will generally use small Greek letters as variables on
morphisms of $\eeq$. Given equivalence relations $R,S$, the morphism
$\alpha^{R,S}(f): R \rightarrow S$ induced by some computable function $f$
will be written simply as $\alpha(f)$ when the pair of equivalence relations
will be clear from the context.
\end{remark}

We observe:

\begin{theorem}\label{thm:equivalence}
The categories $\eeq$ and $\num$ are equivalent. In fact there exist functors
$F: \num \rightarrow \eeq$ and $G: \eeq \rightarrow \num$ such that
$1_{\eeq}= F\circ G$, and $1_{\num} \simeq   G\circ F$, where the relation
$\simeq$ on functors denotes natural equivalence.
\end{theorem}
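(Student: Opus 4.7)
The plan is to define $F$ and $G$ explicitly using the kernel and quotient constructions, then verify the (co)unit conditions.

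First I would define the functors. On objects, set $G(R) = \langle \nu_R, \omega_{/R}\rangle$ (which is a numbering since $\nu_R$ is surjective by definition), and set $F(N) = \ker(\nu) := \{(x,y) : \nu(x) = \nu(y)\}$ for $N = \langle \nu, S\rangle$. On morphisms: given $\alpha = \alpha^{R,S}(f) \in \eeq(R,S)$, the defining commutative square shows that $\alpha$ is itself a $\num$-morphism from $\langle \nu_R, \omega_{/R}\rangle$ to $\langle \nu_S, \omega_{/S}\rangle$ induced by the same $f$, so we set $G(\alpha) = \alpha$. Conversely, given $\mu = \mu^{N_1,N_2}(f) \in \num(N_1,N_2)$, from $\nu_1(x)=\nu_1(y)$ one gets $\nu_2(f(x))=\mu(\nu_1(x))=\mu(\nu_1(y))=\nu_2(f(y))$, so $f$ is $(\ker \nu_1,\ker \nu_2)$-equivalence preserving, and we set $F(\mu) = \alpha^{\ker\nu_1,\ker\nu_2}(f)$. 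I would verify that this is independent of the choice of $f$: if $f,g$ both induce $\mu$, then $\nu_2(f(x))=\nu_2(g(x))$ for all $x$, so $[f(x)]_{\ker\nu_2}=[g(x)]_{\ker\nu_2}$, which means the two induced morphisms in $\eeq$ coincide. Functoriality (preservation of composition and identities) in both directions is then a direct unpacking of the definitions.

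Next I would check the strict equality $F \circ G = 1_\eeq$. On objects, $F(G(R)) = \ker(\nu_R)$, and by definition of $\nu_R$ we have $\nu_R(x)=\nu_R(y)$ iff $[x]_R = [y]_R$ iff $x \rel R y$, so $\ker(\nu_R)=R$. On morphisms, $F(G(\alpha^{R,S}(f)))$ is by construction the morphism in $\eeq(R,S)$ induced by the same $f$, so it is $\alpha^{R,S}(f)$ itself.

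For the natural equivalence $G \circ F \simeq 1_\num$, I would exhibit the unit. For each $N = \langle \nu, S\rangle$, define $\eta_N : N \to G(F(N))$ to be the morphism $\mu^{N,GF(N)}(1_\omega)$; concretely, $\eta_N(\nu(x)) = [x]_{\ker\nu}$, which is well defined, injective, and surjective (using that $\nu$ is onto), hence an iso in $\num$ with inverse induced again by $1_\omega$. Naturality amounts to checking that for $\mu = \mu^{N_1,N_2}(f)$, both $\eta_{N_2}\circ\mu$ and $GF(\mu)\circ\eta_{N_1}$ send $\nu_1(x)$ to $[f(x)]_{\ker\nu_2}$, which is immediate.

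There is no real obstacle here: the proof is a sequence of well-definedness verifications. The only subtlety worth highlighting is that $F \circ G$ is the identity on the nose (not merely up to isomorphism) because the construction $R \mapsto \langle\nu_R,\omega_{/R}\rangle \mapsto \ker(\nu_R)$ literally returns $R$, while $G \circ F$ can only be identity up to a natural iso because the passage $\langle\nu,S\rangle \mapsto \langle\nu_{\ker\nu},\omega_{/\ker\nu}\rangle$ replaces the set $S$ by an isomorphic copy of equivalence classes.
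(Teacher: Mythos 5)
Your proposal is correct and follows essentially the same route as the paper: the same functors $F(N)=\ker\nu$ and $G(R)=\langle\nu_R,\omega_{/R}\rangle$ acting via the same inducing functions, the same strict identity $F\circ G=1_{\eeq}$, and the same natural isomorphism $N\to G(F(N))$ sending $\nu(x)$ to $[x]_{\ker\nu}$ (induced by $1_\omega$). Your write-up simply fills in the well-definedness and naturality checks that the paper leaves as routine.
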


\begin{proof}
If $N=\langle \nu, S\rangle$ is a numbering then define $F(N)$ to be the
equivalence relation
\[
x \rel{F(N)} y \Leftrightarrow \nu(x)=\nu(y),
\]
and if $\mu: N_1 \rightarrow N_2$ is a morphism in $\num$, with
$\mu=\mu^{N_1,N_2}(f)$ for some computable $f$, then define
$F(\mu)=\alpha^{F(N_1),F(N_2)}(f)$. The definition is well given, as it does
not depend on the choice of $f$.

Conversely, if $R$ is an equivalence relation on $\omega$, then define
$G(R)=\langle \nu_R, \omega_{/R}\rangle$, where $\nu_R(x)=[x]_R$, and if
$\alpha: R_1 \rightarrow R_2$ is a morphism in $\eeq$, with
$\alpha=\alpha^{R_1,R_2}(f)$ for some computable $f$, then let
$G(\alpha(f))=\mu^{G(R_1),G(R_2)}(f)$. Again, the definition is well given.

It is now trivial to check that the claims hold: in particular, the set of
$\num$-morphisms
\[
\{N \overset{\mu_N}{\rightarrow} G(F(N)): N \in \textrm{ob}(\num)\},
\]
where $\mu_N(s)=[\nu(s)]_{F(N)}$ (with $N=\langle \nu, S\rangle$ and $s \in
S$) provides a natural equivalence $1_{\num} \simeq   G\circ F$.
\end{proof}

\section{Monomorphisms and epimorphisms}
In a category $\mathbb{C}$, a morphism $\gamma: b \rightarrow c$ is a
\emph{monomorphism} if $\alpha=\beta$ for every commutative
$\mathbb{C}$-diagram of the form
\[
\begin{tikzcd}  a
\ar[r,shift left=.70ex,"\alpha"]
\ar[r,shift right=.70ex,swap,"\beta"]
&
b
\ar[r,"\gamma"]
&
c
\end{tikzcd}
\]
where, ``commutative'' simply means: $\gamma\circ \alpha=\gamma \circ \beta$.

Dually, a morphism $\gamma: c \rightarrow a$ is an \emph{epimorphism} if
$\alpha=\beta$ for every commutative $\mathbb{C}$-diagram of the form
\[
\begin{tikzcd}
c
\ar[r,"\gamma"]
&a
\ar[r,shift left=.70ex,"\alpha"]
\ar[r,shift right=.70ex,swap,"\beta"]
&
b
\end{tikzcd}
\]
i.e., when $\alpha$ and $\beta$ are morphisms such that $\alpha \circ \gamma=
\beta \circ \gamma$.

\begin{lemma}\label{lem:mono=11}
In the category~$\eeq$ the monomorphisms coincide with the~$1$-$1$ morphisms.
\end{lemma}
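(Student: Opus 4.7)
The plan is to prove both directions separately, with the nontrivial direction proceeding by contrapositive.

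For the easy direction, suppose $\gamma \in \eeq(R,S)$ is $1$-$1$ as a function $\omega_{/R} \to \omega_{/S}$. If $\alpha, \beta \in \eeq(T,R)$ satisfy $\gamma \circ \alpha = \gamma \circ \beta$, then for every $[x]_T$ we have $\gamma(\alpha([x]_T)) = \gamma(\beta([x]_T))$, and injectivity of $\gamma$ forces $\alpha([x]_T) = \beta([x]_T)$. So $\alpha = \beta$, and $\gamma$ is a monomorphism. This argument is purely set-theoretic and ignores the computability condition on inducing functions.

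For the converse, I would argue by contrapositive. Suppose $\gamma = \alpha^{R,S}(g)$ for some computable $g$ and $\gamma$ is not $1$-$1$: pick $x,y \in \omega$ with $[x]_R \neq [y]_R$ but $[g(x)]_S = [g(y)]_S$. The key move is to exhibit a witness object $T$ together with two distinct morphisms into $R$ that agree after composition with $\gamma$. The most economical choice is $T = \Id$: since $\Id$ is equality on $\omega$, \emph{every} function $\omega \to \omega$ is $(\Id, R)$-equivalence-preserving, so any computable function defines a morphism $\Id \to R$ in $\eeq$. Let $f_\alpha$ be the constant function with value $x$ and $f_\beta$ the constant function with value $y$; both are computable, hence induce $\eeq$-morphisms $\alpha = \alpha^{\Id, R}(f_\alpha)$ and $\beta = \alpha^{\Id, R}(f_\beta)$.

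Then $\alpha$ sends every class of $\Id$ to $[x]_R$ and $\beta$ sends every class of $\Id$ to $[y]_R$, so $\alpha \neq \beta$ since $[x]_R \neq [y]_R$. On the other hand, $\gamma \circ \alpha$ and $\gamma \circ \beta$ are both constant with value $[g(x)]_S = [g(y)]_S$, so they coincide. This shows that $\gamma$ is not a monomorphism, completing the contrapositive. The only thing to verify carefully is that the constant-function construction is legitimate inside $\eeq$, which is immediate from the definition since $\Id \in \mathrm{ob}(\eeq)$ and constants are computable; there is no real obstacle in the argument.
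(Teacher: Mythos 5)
Your proof is correct and follows essentially the same route as the paper: the paper also handles the injective-implies-mono direction as trivial and, for the converse, uses the two constant computable functions with values in the two distinct $R$-classes that $\gamma$ identifies (the paper lets the source object be an arbitrary equivalence relation $E$, where you fix $\Id$, but this is an immaterial difference since constant functions are equivalence-preserving from any source).
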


\begin{proof}
Consider any morphism $\gamma: R \rightarrow S$, with $\gamma=\alpha(f)$ for
some computable function~$f$. Assume for a contradiction that~$\gamma$ is
not~$1$-$1$, and let $[a_{1}]_{R}, [a_{2}]_{R}$ be distinct equivalence
classes such that $\gamma([a_{1}]_{R})=\gamma([a_{2}]_{R})$.  For $i=1,2$,
define the computable function $g_{i}(x)=a_{i}$. Then, for every equivalence
relation~$E$, the functions~$g_{1}$ and~$g_{2}$ induce distinct morphisms
$\alpha_{1}=\alpha(g_{1}), \alpha_{2}=\alpha(g_{2}): E \rightarrow R$, such
that $\gamma \circ \alpha_{1}=\gamma \circ \alpha_{2}$, showing that $\gamma$
is not mono.

The converse, i.e. if $\gamma$ is $1$-$1$ then $\gamma$ is mono, is trivial.
\end{proof}

Given equivalence relations $R,S$ on $\omega$ we recall that $R$ is
\emph{reducible to $S$} (notation $R\leq S$) if there exists a computable
function $f$ such that
\[
(\forall x,y)[x \rel{R} y \Leftrightarrow f(x) \rel{S} f(y)].
\]
In other words, $R\leq S$ if and only if there exists a $1$-$1$-morphism
$\mu: R\rightarrow S$. Thus we have the following:

\begin{corollary}\label{cor:injective}
If~$R$ and~$S$ are equivalence relations on~$\omega$, then $R \le S$ if and
only if there is a monomorphism $\mu: R \rightarrow S$.
\end{corollary}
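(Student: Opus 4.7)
The plan is to derive the corollary directly by chaining two equivalences already established in the excerpt. The preceding paragraph observes that $R \le S$ holds if and only if there exists a $1$-$1$ morphism $\mu : R \to S$, since unwinding the definition, a computable function $f$ with $x \rel{R} y \Leftrightarrow f(x) \rel{S} f(y)$ is precisely a computable function that is $(R,S)$-equivalence preserving and whose induced map $\alpha(f) : \omega_{/R} \to \omega_{/S}$ is injective; and conversely, given any $1$-$1$ morphism $\mu = \alpha(f)$, the inducing function $f$ must satisfy both implications of the reduction condition. Lemma~\ref{lem:mono=11} then identifies the $1$-$1$ morphisms of $\eeq$ with the monomorphisms, so the corollary follows by transitivity.

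Concretely, I would carry out the two directions as a one-line argument each. For the forward direction, suppose $R \le S$ via a computable reduction $f$; then $\alpha(f) : R \to S$ is a well-defined morphism which is $1$-$1$ because $[x]_R \neq [y]_R$ forces $\neg(x \rel{R} y)$, hence $\neg(f(x) \rel{S} f(y))$, hence $[f(x)]_S \neq [f(y)]_S$. By Lemma~\ref{lem:mono=11}, $\alpha(f)$ is a monomorphism. For the converse, given a monomorphism $\mu : R \to S$, Lemma~\ref{lem:mono=11} tells us that $\mu$ is $1$-$1$, and writing $\mu = \alpha(f)$ for a computable $(R,S)$-equivalence preserving $f$, the injectivity of $\mu$ yields $x \rel{R} y \Leftrightarrow [x]_R = [y]_R \Leftrightarrow [f(x)]_S = [f(y)]_S \Leftrightarrow f(x) \rel{S} f(y)$, so $f$ witnesses $R \le S$.

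There is no real obstacle here: the corollary is essentially a bookkeeping statement that packages the characterization of reducibility in terms of $1$-$1$ morphisms together with Lemma~\ref{lem:mono=11}. The only point worth being explicit about is that when passing from a $1$-$1$ morphism back to a reduction, both implications of the reduction equivalence are needed, and both follow from the fact that $f$ is $(R,S)$-equivalence preserving (which gives the left-to-right direction) and that $\alpha(f)$ is injective on classes (which gives the right-to-left direction).
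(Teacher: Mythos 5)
Your proposal is correct and follows exactly the paper's route: it combines the observation preceding the corollary (that $R \le S$ holds precisely when there is a $1$-$1$ morphism $R \rightarrow S$) with Lemma~\ref{lem:mono=11} identifying the $1$-$1$ morphisms with the monomorphisms. You merely spell out the details that the paper leaves implicit.
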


\begin{proof}
It immediately follows from the coincidence of monomorphisms with injective
morphisms.
\end{proof}

From the point of view of category theory, $R \leq S$ may also be expressed
by saying that~$R$ is a \emph{subobject} of~$S$, see MacLane
\cite[p.~122]{Maclane:Book} and Ershov \cite{Ershov:positive,
Ershov-russian:Book}.

We now move on to consider epimorphisms and their relations with the onto
morphisms.

\begin{lemma}
In $\eeq$ every onto morphism is an epimorphism.
\end{lemma}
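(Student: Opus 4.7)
The plan is to run the standard diagram chase: since morphisms in $\eeq$ are, by definition, set-functions between the quotient sets $\omega_{/R}, \omega_{/S}$, and an onto morphism is surjective as such a set-function, the usual argument in $\mathbf{Set}$ that ``surjective implies epi'' should transfer verbatim, with no computability considerations required at the outer level.

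More concretely, I would take an onto morphism $\gamma: R \rightarrow S$, say $\gamma=\alpha(f)$, and suppose we are given morphisms $\alpha, \beta: S \rightarrow T$ such that $\alpha \circ \gamma=\beta \circ \gamma$. To show $\alpha=\beta$, I would pick an arbitrary $S$-equivalence class $[y]_S \in \omega_{/S}$ and use the hypothesis that $\gamma$ is onto to produce an $R$-class $[x]_R$ with $\gamma([x]_R)=[y]_S$ (equivalently, $[f(x)]_S=[y]_S$). Then a short chain
\[
\alpha([y]_S)=\alpha(\gamma([x]_R))=(\alpha\circ\gamma)([x]_R)=(\beta\circ\gamma)([x]_R)=\beta(\gamma([x]_R))=\beta([y]_S)
\]
yields $\alpha([y]_S)=\beta([y]_S)$, and since $[y]_S$ was arbitrary, $\alpha=\beta$.

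There is essentially no obstacle here: the only thing worth emphasizing is the mild point that the equality $\alpha=\beta$ we need is equality of the induced maps on equivalence classes, not of the computable functions that happen to induce them; this is exactly what the definition of morphism in $\eeq$ gives us, so we never have to argue at the level of computable representatives. The lemma is therefore a direct consequence of the fact that in $\mathbf{Set}$ every surjection is an epimorphism, applied to the underlying map $\gamma:\omega_{/R}\to\omega_{/S}$.
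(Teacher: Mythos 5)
Your argument is correct and is exactly the diagram chase the paper has in mind when it dismisses this lemma as ``Trivial'': since morphisms in $\eeq$ are genuine set-functions on the quotient sets, surjectivity of $\gamma$ lets you cancel it on the right, with no computability issues arising. Nothing further is needed.
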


\begin{proof}
Trivial.
\end{proof}

However, we now show that the converse is not always true:

\begin{theorem}\label{thm:epi-not-onto}
There are epimorphisms which are not onto.
\end{theorem}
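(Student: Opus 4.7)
The plan is to exhibit, inside the subcategory $\eeq(\Pi^0_1)$, a morphism $\gamma$ that misses one equivalence class of its target while remaining right-cancellable; the engine will be the fact that when $T$ is a coceer, $\neg T$ is c.e.

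First I would fix a c.e.\ set $W$ that is not computable (say, the halting set) and put $B:=\omega\setminus W$. Define $S$ on $\omega$ by $x\rel{S}y$ iff $x=y$ or $\{x,y\}\subseteq B$, so its classes are the singletons $\{w\}$, $w\in W$, together with the single class $B$. A quick check that $\neg S=\{(x,y):x\neq y\wedge(x\in W\vee y\in W)\}$ is c.e.\ places $S$ in $\eeq(\Pi^0_1)$. Taking a computable enumeration $h$ of $W$ and letting $\gamma:=\alpha(h):\Id\to S$, the image of $\gamma$ is $\{\{w\}:w\in W\}$, missing the class $B$, so $\gamma$ is not onto.

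To show $\gamma$ is epi, I would take any $T\in\eeq(\Pi^0_1)$ and morphisms $\alpha=\alpha(f),\beta=\alpha(g):S\to T$ with $\alpha\circ\gamma=\beta\circ\gamma$, and introduce
\[
U:=\{x\in\omega : f(x)\not\rel{T} g(x)\},
\]
which is c.e.\ because $\neg T$ is c.e.\ and $U$ is the preimage of $\neg T$ under the computable map $x\mapsto(f(x),g(x))$. The hypothesis $\alpha\circ\gamma=\beta\circ\gamma$ gives $f(w)\rel{T} g(w)$ for every $w\in W$, so $W\cap U=\emptyset$. If $\alpha\neq\beta$, then since they already agree on every $\{w\}$ they must disagree on the one remaining class $B$; since $(S,T)$-equivalence preservation keeps $f,g$ constant on $B$ in $T$, the statement ``$f(b)\rel{T} g(b)$'' is independent of $b\in B$, and the disagreement forces $B\subseteq U$. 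Combining $B\subseteq U$ with $W\cap U=\emptyset$ yields $U=B$, which would make $B$ c.e., contradicting the non-computability of $W$.

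The hard part will be recognising the auxiliary set $U$ and realising that its being c.e.\ rests on $T$ being a coceer. Indeed, if $T$ were allowed to range over all of $\eeq$, one could take the equivalence relation whose classes are $\{2w,2w+1\}$ for $w\in W$ together with the two separate classes $2B$ and $2B+1$, and then $f(x)=2x$, $g(x)=2x+1$ would witness that the same $\gamma$ fails to be epi in $\eeq$. So the whole argument rests essentially on the enumerability of $\neg T$ inside $\eeq(\Pi^0_1)$, and locating the correct c.e.\ set $U$ is the one creative step.
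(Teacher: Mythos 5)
Your proof is correct and follows essentially the same route as the paper: a coceer whose only undecidable class is missed by a morphism from $\Id$ onto the c.e.\ part, with epi-ness in $\eeq(\Pi^0_1)$ forced because the set where $f$ and $g$ $T$-disagree is c.e.\ when $T$ is a coceer. Your version is in fact a slight streamlining --- one non-recursive class $\overline{W}$ instead of the paper's two classes $2\overline{K}$ and $2\overline{K}+1$, hence one case instead of three --- and your closing observation that the morphism fails to be epi in all of $\eeq$ correctly identifies why the test objects must be restricted to coceers, exactly as in the paper's own argument.
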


\begin{proof}
Let $A,B$ be two disjoint undecidable $\Pi^0_1$ sets such that their union is
undecidable. For instance take $A= 2 \overline{K}$ and $B=2 \overline{K} +1$,
where $\overline{K}$ denotes any undecidable co-c.e. set. Thus $A\cup B=
\overline{K} \oplus \overline{K}$ is an undecidable $\Pi^{0}_{1}$ set.
Consider the coceer $R$ whose equivalence classes are $A, B$ and then all
singletons. Since $C=\overline{A \cup B}$ is an infinite c.e. set, we can fix
a computable bijection $f$ of $\omega$ with $C$. Clearly this function
provides a reduction
\[
\alpha(f): \Id \rightarrow R
\]
such that the range of $f$ is $C$. The monomorphism $\alpha=\alpha(f)$
induced by this $f$ is not onto, as it leaves out the two equivalence classes
$A,B$. We claim that $\alpha$ is epi. Suppose that $\alpha(f_{1}),
\alpha(f_{2}): R \rightarrow S$, for some coceer $S$ and computable functions
$f_1, f_2$, are distinct morphisms such that $\alpha(f_{1})\circ
\alpha=\alpha(f_{2})\circ \alpha$. As  these morphisms may be distinct only
because of the values they take on $A$ and $B$. We distinguish the following
cases:
\begin{enumerate}
  \item $\alpha(f_1)(A) \ne \alpha(f_2)(A)$ and $\alpha(f_1)(B) =
      \alpha(f_2)(B)$. Then
      \[
(\forall x)[x \in \overline{A} \Leftrightarrow f_1(x) \rel{S} f_2(x)],
      \]
      giving that $\overline{A}$ is co-c.e., hence $A$ is decidable,
  contradiction;
  \item $\alpha(f_1)(A) = \alpha(f_2)(A)$ and $\alpha(f_1)(B) \ne
      \alpha(f_2)(B)$. A similar argument as in the previous item shows
      that $B$ is decidable, contradiction;
  \item $\alpha(f_1)(A) \ne \alpha(f_2)(A)$ and $\alpha(f_1)(B) \ne
      \alpha(f_2)(B)$. In this case
 \[
(\forall x)[x \in \overline{A \cup B} \Leftrightarrow f_1(x) \rel{S} f_2(x)],
      \]
      showing that $A \cup B$ is decidable, which is again a contradiction.
\end{enumerate}
\end{proof}

We recall that $\mathbb{D}$ is a \emph{full subcategory of $\mathbb{C}$} if
$\textrm{ob}(\mathbb{D}) \subseteq \textrm{ob}(\mathbb{C})$, and, for all
$a,b \in  \textrm{ob}(\mathbb{D})$, we have that $\mathbb{D}(a,b)=
\mathbb{C}(a,b)$.

\begin{emdef}
Let $\mathcal{C}$ be a class of equivalence relations on $\omega$. Then by
$\eeq(\mathcal{C})$ we denote the full subcategory of $\eeq$ whose objects
are exactly the equivalence relations in $\mathcal{C}$.
\end{emdef}

\begin{corollary}
In $\eeq(\Pi^{0}_{1})$ there are epimorphisms which are not onto.
\end{corollary}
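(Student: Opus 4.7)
The plan is to observe that the example already constructed in the proof of Theorem~\ref{thm:epi-not-onto} lives entirely inside $\eeq(\Pi^0_1)$, so nothing new needs to be built; the work is just to check that the ``epi'' property survives the passage to the full subcategory.

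First I would note that the source $\Id$ is trivially $\Pi^0_1$ (equality is decidable), and the target $R$ is a coceer by construction: its non-trivial equivalence classes are the $\Pi^0_1$ sets $A,B$, and all other classes are singletons, so $R$ as a relation on $\omega$ is $\Pi^0_1$. Hence the morphism $\alpha=\alpha(f):\Id\to R$ exhibited in Theorem~\ref{thm:epi-not-onto} is a morphism of $\eeq(\Pi^0_1)$, and we already saw there that it is not onto.

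Next I would record the general categorical fact that if $\mathbb{D}$ is a full subcategory of $\mathbb{C}$ and $\gamma$ is an epimorphism in $\mathbb{C}$ whose domain and codomain both lie in $\mathbb{D}$, then $\gamma$ is still an epimorphism in $\mathbb{D}$: indeed, the defining implication $\alpha\circ\gamma=\beta\circ\gamma\Rightarrow\alpha=\beta$ has to be tested against strictly fewer pairs of parallel morphisms $\alpha,\beta$ in $\mathbb{D}$ than in $\mathbb{C}$. Applying this to $\mathbb{C}=\eeq$, $\mathbb{D}=\eeq(\Pi^0_1)$, and to the morphism $\alpha$ above (shown to be epi in $\eeq$ by Theorem~\ref{thm:epi-not-onto}) yields that $\alpha$ is epi in $\eeq(\Pi^0_1)$ as well.

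There is really no obstacle here; the corollary is immediate once one notices that both objects involved in the witness from Theorem~\ref{thm:epi-not-onto} are already coceers and that full subcategories inherit epimorphisms. The only thing to be mildly careful about is not to confuse this direction with the converse (a morphism epi in a full subcategory need not be epi in the ambient category), but that converse is not needed here.
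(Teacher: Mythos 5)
Your proposal is correct and matches the paper's own argument, which likewise just observes that $\Id$ and $R$ are coceers and that the epimorphism property passes to the full subcategory. (In fact the paper cites ``the proof'' of Theorem~\ref{thm:epi-not-onto} because the epi check there is literally carried out against coceer targets $S$, so the restriction step you describe is even more immediate than the general fact about full subcategories.)
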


\begin{proof}
Immediate by the proof of Theorem~\ref{thm:epi-not-onto}, as $\Id$ and $R$
are coceers.
\end{proof}

On the other hand,

\begin{theorem}\label{thm:epi=onto}
In $\eeq(\Sigma^{0}_{1})$ the epimorphisms coincide with the onto morphisms.
\end{theorem}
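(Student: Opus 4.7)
The plan is to prove the nontrivial direction: every epimorphism $\gamma:R\to S$ in $\eeq(\Sigma^0_1)$ is onto, by contrapositive. So I start by assuming $\gamma=\alpha(f)$ (for some computable $f$) is not onto, fix an element $a$ with $[a]_S$ outside the range of $\gamma$, and aim to produce a ceer $T$ and two distinct morphisms $\alpha(f_1),\alpha(f_2):S\to T$ that coincide after precomposition with $\gamma$.

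The key idea is to make $T$ live on $\omega$ and split each number into an ``even copy'' and an ``odd copy,'' setting $f_1(x)=2x$ and $f_2(x)=2x+1$. Then $\alpha(f_1)\circ\gamma=\alpha(f_2)\circ\gamma$ forces me to identify $2f(n)$ with $2f(n)+1$ in $T$ for every $n$, while I must keep $2a$ and $2a+1$ apart. This suggests defining $T$ as the ceer generated (by c.e.\ enumeration of its symmetric/transitive closure) by the following pairs:
\[
\{(2x,2y),(2x{+}1,2y{+}1):x\rel{S}y\}\ \cup\ \{(2x,2x{+}1):(\exists n)\,x\rel{S}f(n)\}.
\]
Because $S$ is a ceer and the range of $\gamma$ is c.e., both generator sets are c.e., so $T$ is a ceer. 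By construction $f_1,f_2$ are $(S,T)$-equivalence preserving, and $(2f(n),2f(n)+1)$ is a generator, so $\alpha(f_1)$ and $\alpha(f_2)$ agree on every $S$-class hit by $\gamma$.

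The main step is to verify that $2a$ and $2a+1$ remain $T$-inequivalent. I would analyze the generators as a graph: the first two families of generators never connect an even to an odd number, and evens (resp.\ odds) are connected exactly along the $S$-classes. So the only way to move between parities in the equivalence closure is via a third-type generator $(2x,2x+1)$ with $x$ in the $S$-saturation of the range of $f$. Tracing any hypothetical chain from $2a$ to $2a+1$, the first parity switch must occur at some $2y\leftrightarrow 2y+1$ with $y\rel{S}a$ and $y\rel{S}f(n)$ for some $n$, which would force $a\rel{S}f(n)$, contradicting the choice of $a$. Hence $\alpha(f_1)\neq\alpha(f_2)$, showing $\gamma$ is not epi.

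The only real obstacle I expect is justifying this last separation argument cleanly; everything else is a direct verification. The trivial direction (onto implies epi) has already been recorded, so combining the two yields the theorem.
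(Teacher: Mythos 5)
Your proposal is correct, and it takes a genuinely different route from the paper. The paper's proof fixes a nontrivial precomplete ceer $T$, uses a totalizer and the Recursion Theorem in a stage-by-stage construction modelled on Shavrukov, and produces two distinct morphisms $S\to T$ that are \emph{constant} on $\ran(\gamma)$. You instead build what is essentially the cokernel pair of $\gamma$: two copies of $S$ glued along the $S$-saturation $A=\{x:(\exists n)\,x\rel{S}f(n)\}$ of the range of $f$. Your verification goes through: since $A$ is $S$-saturated, the generated relation $T$ admits the explicit description $u\rel{T}v$ iff $\lfloor u/2\rfloor\rel{S}\lfloor v/2\rfloor$ and ($u\equiv v\bmod 2$ or $\lfloor u/2\rfloor\in A$), which is an equivalence relation containing your generators and contained in their closure; hence $2a\rel{T}2a+1$ iff $a\in A$, and your ``first parity switch'' argument is just a hands-on version of this. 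The approach is more elementary (no precompleteness, no Recursion Theorem) and it isolates exactly where $\Sigma^0_1$ enters: $A$ is c.e.\ because $S$ and $\ran(f)$ are, so $T$ is a ceer --- which also explains transparently why the argument breaks for coceers, in line with Theorem~\ref{thm:epi-not-onto}. Two further remarks. First, what the paper's heavier machinery buys is the extra normal form (the separating morphisms land in a fixed precomplete, hence light, ceer and are constant on the range), which the authors use to observe that epi${}={}$onto also holds in $\eeq(\light)$. Second, your $T$ appears to inherit both lightness and darkness from $S$ (a reduction $\Id\le T$ yields one of infinitely many values of a fixed parity, whence $\Id\le S$), so your construction seems to settle the analogous question for $\eeq(\dark)$ as well, which the paper leaves open; this is worth checking carefully and writing up.
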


\begin{proof}
Suppose that $R,S$ are ceers, and $\alpha: R \rightarrow S$ is a morphism
which is not onto. Let $h$ be a computable function such that $\alpha=
\alpha(h)$; let $A=\{x: (\exists y)[y \in \ran(h) \, \&\, y \rel{S} x]\}$,
and let~$a$ be such that $[a]_S \notin \ran(\alpha)$. Consider any nontrivial
precomplete ceer~$T \ne \Id_1$ (see
\cite{Ershov:positive,Ershov-russian:Book}, or \cite{Andrews-Badaev-Sorbi}),
and by definition of precompleteness, let $f(e,x)$ be a totalizer for $T$,
i.e. a computable function such that
\[
(\forall e,x)[\phi_e(x)\downarrow \Rightarrow \phi_e(x) \rel{T} f(e,s)].
\]
We are going to define two computable functions $g_1, g_2$ which are
$(S,T)$-equivalence preserving, and induce distinct morphisms
$\alpha_1=\alpha(g_1)$, $\alpha_2=\alpha(g_2)$ that coincide (and are
constant!) on $\ran(\alpha)$. Our construction is somewhat modelled on the
proofs of \cite[Theorem~2.6~and~Corollary~2.8]{Shavrukov:ufp}. Let $b, c_1,
c_2$ be such that the equivalence classes $[b]_T, [c_1]_T, [c_2]_T$ are
pairwise distinct: we are using the fact that a nontrivial precomplete
equivalence relation has infinitely many equivalence classes. Let $\{A_s: s
\in \omega\}$ be a computable approximation to $A$ (i.e., $A_0=\emptyset$,
$A_s\subseteq A_{s+1}$, $A=\bigcup_s A_s$, and each $A_s$ is a finite set
uniformly given by its strong index); let $\{S_s: s\in \omega\}$ be a
computable approximation to $S$, meaning that for every $s$, $S_s$ is a
decidable equivalence relation, $S_0=\Id$, $S_s \subseteq S_{s+1}$,
$S=\bigcup_s S_s$, and there exists a computable $r$ such that, for every $s$
and $i\ge r(s)$ we have that $[i]_{S_s}=\{i\}$; finally, assume that $\{T_s:
s\in \omega\}$ is a similar approximation to $T$. Assume that
$S_{2s+2}=S_{2s+1}$, and $A_{2s+2}=A_{2s+1}$, for every $s$. We may as well
assume that the above approximations satisfy: If $i \notin A_s$ and $i
\cancel{S_s} a$ and $j \rel{S_s} i$ then $j \notin A_s$ (and of course $j
\cancel{S_s} a$). At even stages neither $A$ nor $S$ changes, so we only
devote these stages to make sure that the construction has not placed
$[c_1]_T$ in the range of the morphism induced by $g_1$, or has made $g_1$
not $(S,T)$-equivalence preserving.

Define $g_1(i)=f(e,i)$ where $e$ is a fixed point that we control by the
Recursion Theorem. We define $\phi_e$ in stages. At any stage we may call a
special clause (named $(\ast)$), which, if called, ``freezes'' the
construction. At stage $s+1$, if clause $(\ast)$ has not been called at any
previous stage then the following inductive assumption (referred to as
$\dag$) will be true: that if $i \notin A_s$ and $i \cancel{S_s} a$ and
$\phi_{e,s}(i) \downarrow$, then $i$ is not least in its $S_s$-equivalence
class.

\smallskip
\noindent Stage $0$. Let $\phi_{e,0}(i)$ be undefined for all $i$.

\smallskip
\noindent Stage $s+1$, with $s=2t$. If we have called $(\ast)$ at any
previous stage then let $\phi_{e,s+1}=\phi_{e,s}$. Otherwise, if
$\phi_{e,s}(i)$ is still undefined then:
\begin{enumerate}
  \item  if $i \in A_s$ define $\phi_{e,s+1}(i)=b$; if $i \rel{S_s} a$
      define $\phi_{e,s+1}(i)=c_1$;
  \item $i \notin A_s$ and $i \cancel{\rel{S_s}} a$ and there exists $j<i$
      with $j \rel{S_s} i$ and $\phi_{e,s}(j) \uparrow$, then define
      $\phi_{e,s+1}(i)=f(e,j)$ for the least such $j$.
\end{enumerate}
Note that this preserves the inductive assumption ($\dag$).

\smallskip
\noindent Stage $s+1$, with $s=2t+1$: If we have called $(\ast)$ at any
previous stage then let $\phi_{e,s+1}=\phi_{e,s}$. Otherwise, if $i \notin
A_s$ and $i \cancel{S_s} a$  and $\phi_{e,s}(i)$ is still undefined, then
\begin{enumerate}
  \item  if $f(e,i) \rel{T_s} b$ or $f(e,i) \rel{T_s} c_1$ then define
      $\phi_{e,s+1}(i)=c_2$. After this, call clause $(\ast)$;
  \item if  $f(e,i) \rel{T_s} c_2$ then define $\phi_{e,s+1}(i)=c_1$. After
      this, call clause $(\ast)$.
\end{enumerate}
Notice that by inductive assumption ($\dag$), for every $i$ such that $i
\notin A_s$ and $i \cancel{S_s} a$ there is always $j$ such that $j \notin
A_s$ and $j \cancel{S_s} a$, so that we certainly act on $j$ as in (1) or
(2).

\smallskip

It is now easy to check the following:
\begin{itemize}
  \item We never call clause $(\ast)$. Indeed, if we call it in (1) of an
      even stage then for some $i$ we would have $c_2=\phi_e(i) \rel{T}
      f(e,i)$ with $f(e,i) \rel{T} b$ or $f(e,i) \rel{T} c_1$; if we call
      $(\ast)$ in (2) of an even stage then for some $i$ we would have
      $c_1=\phi_e(i) \rel{T} f(e,i)\rel{T} c_2$. Both cases give rise to
      $c_1 \rel{T} c_2$, a contradiction.

  \item If $\phi_e(i)$ diverges then $i \notin A \cup [a]_S$ and $i$ is
      least in its $S$-equivalence class.

  \item If $i \in A$ then $f(e,i) \rel{T} b$ and of $i \in [a]_S$ then
      $f(e,i) \rel{T} c_1$. To see this, as $\phi_e(i)$ is defined, let
      $i_0<i_1< \ldots < i_n$ be such that $i_h \rel{S} i_k$ for all $h,k
      \leq n$, $i_n=i$, and for every $0<k \le n$, $\phi_e(i_k)$ has been
      defined $\phi_e(i_k)=f(e,i_{k-1})$ through (2) of an odd stage,
      whereas $\phi_e(i_0)$ has been defined $\phi_e(i_0)=b$ if $i_0 \in A$
      through (1) of an odd stage, or $\phi_e(i_0)=c_1$ if $i \in [a]_S$
      through (1) of an odd stage. Then as $f(e,i)\rel{T} f(e, i_0)$ we
      have that $f(e,i) \rel{T} b$ if $i \in A$, or $f(e,i) \rel{T} c_1$ if
      $i \in [a]_S$.

  \item if $i,j \notin A \cup [a]_S$ and $i\rel{S} j$ then $f(e,i) \rel{T}
      f(e,j)$. To see this, assume $[i]_S=[j]_S$ and let
      \[
[i]_S=\{i_0< i_1< \ldots\}.
      \]
      Then $\phi_e(i_0)$ is undefined, and by induction on $n$ it is easy
      to see (by an argument similar to the previous item, since if $h>0$
      then $\phi_e(i_h)$ is defined through (2) of an odd stage), that
      $f(e,i_h)\rel{T} f(e,i_0)$.

\item By the previous two items, we get that $g_1$ is $(S,T)$-equivalence
    preserving, and $\alpha(g_1)$ is a morphism from $S$ to $T$.

  \item $[c_2]_T \notin \ran(\alpha_1)$. This follows from the fact that we
      never use clause (1) at even stage. Moreover $[c_1]_T \in
      \ran(\alpha_1)$ as $\alpha_1([a]_S)=[c_1]_T$.
\end{itemize}
In a similar way, but interchanging $c_1$ and $c_2$ at each stage, we define
a computable function $g_2$ such that, letting $\alpha_2=\alpha(g_2)$, we
eventually have
\begin{itemize}
  \item $(\forall [x]_S \in
      \ran(\alpha))[\alpha_1([x]_S)=\alpha_2([x]_S)=[b]_T]$; thus $\alpha_1
      \circ \alpha= \alpha_2 \circ \alpha$;
  \item $\alpha_1 \ne \alpha_2$ as $[c_i]_T \in \ran(\alpha_i)
      \smallsetminus \ran(\alpha_{i-1})$.
\end{itemize}
\end{proof}

\section{Products and coproducts in $\eeq$}
We recall the definitions of products and coproducts in a category
$\mathbb{C}$. If $a,b \in \textrm{ob}(\mathbb{C})$, then a \emph{product of
the pair $(a,b)$} is, when it exists, a triple $(a\times b, \pi_{a},
\pi_{b})$ with $a\times b\overset{\pi_{a}}{\rightarrow} a$, $a\times
b\overset{\pi_{b}}{\rightarrow} b$, such that for all pairs of morphisms $c
\overset{\rho_{a}}{\rightarrow} a$, $c \overset{\rho_{b}}{\rightarrow} b$,
there exists a unique morphism $\rho_{a}\times \rho_{b}$ which makes the
following diagram
\[
    \begin{tikzcd}[row sep=huge]
        & c\ar[dl,"\rho_{a}",swap,sloped] \ar[dr,"\rho_{b}",sloped]
                  \ar[d,dashed,"{\rho_{a} \times \rho_{b}}" description] & \\
        a & a\times b\ar[l,"\pi_{a}"] \ar[r,"\pi_{b}",swap] & b
    \end{tikzcd}
    \]
commute. It is a well known fact of category theory that products are unique
up to isomorphisms (we recall that a pair of objects $a,b$ are
\emph{isomorphic} in a category if there is an \emph{isomorphism} $\alpha: a
\rightarrow b$, i.e.\ a morphism for which there is $\beta\in
\mathrm{ob}(\mathbb{C})$ such that $\beta\circ \alpha=1_a$ and $\alpha \circ
\beta=1_b$), so we will talk about the \emph{product} of two objects, when
the two objects have a product.

A \emph{coproduct of the pair $(a,b)$} is, when it exists, a triple $(a\amalg
b, i_{a}, i_{b})$ with $a \overset{i_{a}}{\rightarrow} a\amalg b$, $b
\overset{i_{b}}{\rightarrow} a\amalg b$, such that for all pairs of morphisms
$a \overset{\rho_{a}}{\rightarrow} c$, $b \overset{\rho_{b}}{\rightarrow} c$,
there exists a unique morphism $\rho_{a} \amalg \rho_{b}$ which makes the
following diagram commute:
    \[
    \begin{tikzcd}[row sep=huge]
        & c  & \\
        a\ar[ur,"\rho_{a}",sloped] \ar[r,"i_{a}", swap] & a\amalg b
           \ar[u,dashed,"{\rho_{a} \amalg \rho_{b}}" description] & b \ar[ul,"\rho_{b}", swap,sloped]
           \ar[l,"i_{b}"]
    \end{tikzcd}
    \]
Again, coproducts are unique up to isomorphisms, so we will talk about the
\emph{coproduct} of two objects, when the two objects have a coproduct.

The following is a simple observation essentially from
\cite{Ershov-russian:Book}.

\begin{theorem}
The category $\eeq$ has all nonempty finite products and nonempty finite
coproducts.
\end{theorem}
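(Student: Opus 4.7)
The plan is to exhibit explicit binary products and binary coproducts in $\eeq$; once these are in hand, nonempty finite products and coproducts follow by a straightforward induction on the number of factors, using associativity up to isomorphism.

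For the binary product of two equivalence relations $R$ and $S$, I would use the Cantor pairing function to define an equivalence relation $R \times S$ on $\omega$ by
\[
x \rel{(R \times S)} y \iff (x)_0 \rel{R} (y)_0 \text{ and } (x)_1 \rel{S} (y)_1.
\]
Since $\langle\_,\_\rangle$ is a computable bijection $\omega^2 \to \omega$, the equivalence classes of $R \times S$ are in natural bijection with pairs $([a]_R,[b]_S)$, which is the right cardinality for a product. The projections are taken to be $\pi_R = \alpha(\lambda x.(x)_0)$ and $\pi_S = \alpha(\lambda x.(x)_1)$, which are clearly $(R\times S,R)$- and $(R\times S,S)$-equivalence preserving. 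Given morphisms $\rho_R = \alpha(f_R)\colon T \to R$ and $\rho_S = \alpha(f_S)\colon T \to S$, I would define the pairing morphism via the computable function $h(x) = \langle f_R(x), f_S(x)\rangle$; one checks that $\alpha(h)$ is $(T,R\times S)$-equivalence preserving and that $\pi_R \circ \alpha(h) = \rho_R$, $\pi_S \circ \alpha(h) = \rho_S$. Uniqueness of the mediating morphism follows from the fact that any morphism $T \to R\times S$ is determined, class by class, by its composites with $\pi_R$ and $\pi_S$.

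For the binary coproduct of $R$ and $S$, I would take the standard ``even/odd'' interleaving $R \oplus S$, defined by
\[
x \rel{(R\oplus S)} y \iff \bigl(x=2u,\; y=2v,\; u \rel{R} v\bigr) \text{ or } \bigl(x=2u+1,\; y=2v+1,\; u \rel{S} v\bigr),
\]
together with injections $i_R = \alpha(\lambda x.\,2x)$ and $i_S = \alpha(\lambda x.\,2x+1)$. These are evidently morphisms in $\eeq$, and the equivalence classes of $R\oplus S$ are precisely $\{i_R([a]_R) : a\in\omega\}\cup\{i_S([b]_S) : b\in\omega\}$, a disjoint union corresponding to the classes of $R$ and $S$. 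Given morphisms $\rho_R = \alpha(f_R)\colon R \to T$ and $\rho_S = \alpha(f_S)\colon S \to T$, the copairing is induced by the computable function
\[
h(x) = \begin{cases} f_R(x/2) & \text{if } x \text{ is even,}\\ f_S((x-1)/2) & \text{if } x \text{ is odd,}\end{cases}
\]
which is $(R\oplus S,T)$-equivalence preserving by construction, and satisfies $\alpha(h)\circ i_R = \rho_R$ and $\alpha(h)\circ i_S = \rho_S$. Uniqueness is again immediate since every class of $R\oplus S$ lies in the image of $i_R$ or $i_S$.

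There is no serious obstacle here: the constructions are forced by the universal properties, and the main points to verify are that the defining relations really are equivalence relations and that the candidate functions are computable and equivalence preserving, all of which reduce to the fact that $\langle\_,\_\rangle$, its projections, and the maps $x\mapsto 2x$, $x\mapsto 2x+1$ are (primitive) recursive. Finally, iterating these binary constructions gives products $R_1\times\cdots\times R_n$ and coproducts $R_1\amalg\cdots\amalg R_n$ for any $n\ge 1$, establishing the claim for all nonempty finite products and coproducts.
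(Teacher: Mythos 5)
Your proposal is correct and follows essentially the same route as the paper: the product is the Cantor-pairing construction with the projections $p_0,p_1$, the coproduct is the uniform join with the even/odd injections, and the mediating morphisms and uniqueness arguments are the ones the paper gives. The only cosmetic addition is your explicit remark that $n$-ary products and coproducts for $n\ge 1$ follow by iterating the binary case, which the paper leaves implicit.
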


\begin{proof}
The product of $R, S$ is the triple $(R\times S, \pi_{R}, \pi_{S})$ so that
\[
\langle x, y\rangle \mathrel{R\times S} \langle u, v\rangle
\Leftrightarrow x\mathrel{R} u \, \&\, y \mathrel{S} v,
\]
with $\pi_{R}=\alpha^{R\times S,R}(p_{0})$, and $\pi_{S}=\alpha^{R\times
S,S}(p_{1})$, where $p_0(x)=(x)_0$ and $p_1(x)=(x)_1$ are the projections of the Cantor pairing function. If $T$ is another equivalence relation and $T
\overset{\rho_{R}}{\rightarrow} R$, $T \overset{\rho_{S}}{\rightarrow} S$ are
two morphisms, with say $\rho_R=\alpha^{T,R}(f_R)$ and
$\rho_S=\alpha^{T,S}(f_S)$ where $f_R$ and $f_S$ are computable functions
then take $\rho_R \times \rho_S=\alpha^{T, R \times S}(f_R\times f_S)$ where
$f_R\times f_S(x)=\langle f_R(x), f_S(x)\rangle$. This makes the defining
diagram commute. To show uniqueness, suppose that $\beta: T \rightarrow
R\times S$ makes the defining diagram commute. Then if $\beta([x]_T)=[\langle
u,v \rangle]_{R\times S}$ we have that $\pi_R([\langle u,v \rangle]_{R\times
S})=[u]_R=\rho_R([x]_T)=[f_R(x)]_R$, and thus $u \rel{R} f_R(x)$, and
similarly $v \rel{S} f_S(x)$, giving $\langle u,v \rangle \rel{R\times S}
\langle f_R(x), f_S(x)\rangle$. This yields
\[
\rho_R\times \rho_S([x]_T)=[\langle f_R(x), f_S(x)\rangle]_{R\times S}=
[\langle u,v \rangle]_{R\times S}=\beta([x]_T),
\]
i.e., $\beta=\rho_R \times \rho_S$.

The coproduct of $R, S$ is the triple $(R \oplus S, i_{R}, i_{S})$, often
called the uniform join of $R,S$, see e.g.\ \cite{joinmeet}, i.e.,
\[
R\oplus S=\{(2x,2y): x \rel{R} y\} \cup \{(2x+1,2y+1): x \rel{S} y\},
\]
with $i_{R}=\alpha(\textrm{ev})$ and $\textrm{ev}(x)=2x$, and
$i_{S}=\alpha(\textrm{odd})$ and $\textrm{odd}(y)=2y+1$. Arguing as in the
case of products, it is easy to see that our definition turns $(R \oplus S,
i_{R}, i_{S})$ into a coproduct. If $\rho_{R}: R \rightarrow T$ and
$\rho_{S}: S \rightarrow T$, then $\rho_{R} \amalg \rho_{S}=\alpha^{R\oplus
S, T}(f_{R} \oplus f_{S})$, where $f_{R}$ and $f_{S}$ induce $\rho_{R}$ and
$\rho_{S}$, respectively, and
\[
f_{R} \oplus f_{S}(x)=
\begin{cases}
f_{R}(y), &\textrm{if $x=2y$},\\
f_{S}(y), &\textrm{if $x=2y+1$}.
\end{cases}
\]
To show uniqueness, suppose that $\beta: R\oplus S  \rightarrow T$ makes the
defining diagram commute: then
\[
\beta([2x]_{R\oplus S})=\beta(i_{R}([x]_{R}))=\rho_{R}([x]_{R})=
(\rho_{R}\amalg \rho_{S})([2x]_{R\oplus S}),
\]
and similarly $\beta([2x+1]_{R\oplus S})=(\rho_{R}\amalg
\rho_{S})([2x+1]_{R\oplus S})$.
\end{proof}

\begin{corollary}\label{cor:closure-products-coproducts}
For every $n$, $\eeq(\Sigma^{0}_{n})$ and $\eeq(\Pi^{0}_{n})$ have nonempty
finite products and coproducts.
\end{corollary}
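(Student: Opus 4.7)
The plan is to reuse verbatim the product and coproduct constructions from the preceding theorem and merely check that, when the inputs $R$ and $S$ lie in $\Sigma^0_n$ (respectively $\Pi^0_n$), the resulting equivalence relations $R\times S$ and $R\oplus S$ remain in the same arithmetical class; the universal property is then inherited automatically from $\eeq$ by fullness of the subcategory.

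For the product, recall that
\[
\langle x,y\rangle \rel{R\times S} \langle u,v\rangle \Leftrightarrow (x \rel{R} u) \,\&\, (y \rel{S} v),
\]
so $R\times S$ is the intersection of the preimages of $R$ and of $S$ under the computable (in fact primitive recursive) functions $\langle x,y\rangle, \langle u,v\rangle \mapsto \langle x,u\rangle$ and $\langle x,y\rangle, \langle u,v\rangle\mapsto \langle y,v\rangle$. For the coproduct,
\[
a \rel{R\oplus S} b \Leftrightarrow \bigl(a,b \text{ both even } \&\, (a/2) \rel{R} (b/2)\bigr) \vee \bigl(a,b \text{ both odd } \&\, ((a-1)/2) \rel{S} ((b-1)/2)\bigr),
\]
so $R\oplus S$ is the union of two decidable restrictions of preimages of $R$ and $S$ under computable functions. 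Since, for every $n\ge 1$, both $\Sigma^0_n$ and $\Pi^0_n$ are closed under finite intersections, finite unions, and preimages under computable functions (and the same is trivially true for $\Delta^0_1$, covering $n=0$), we conclude that $R\times S$ and $R\oplus S$ lie in $\Sigma^0_n$ whenever $R,S$ do, and likewise for $\Pi^0_n$.

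It remains to upgrade product/coproduct in $\eeq$ to product/coproduct in the full subcategory. This is formal: given any object $T$ of $\eeq(\Sigma^0_n)$ (resp.\ $\eeq(\Pi^0_n)$) and morphisms $\rho_R,\rho_S$ as in the universal diagrams, these are morphisms in $\eeq$ by fullness, so the theorem supplies a unique mediating morphism $\rho_R\times\rho_S$ (resp.\ $\rho_R\amalg\rho_S$) in $\eeq$; since its source and target are both in the subcategory, it is a morphism there, and its uniqueness in the subcategory follows from its uniqueness in $\eeq$.

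There is no real obstacle here: the only step with any content is the closure of $\Sigma^0_n$ and $\Pi^0_n$ under the relevant Boolean operations and computable substitutions, which is a standard fact about the arithmetical hierarchy.
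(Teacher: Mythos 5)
Your proof is correct and follows exactly the route of the paper, which simply observes that the product and coproduct constructions from the preceding theorem preserve the classes $\Sigma^0_n$ and $\Pi^0_n$; you have merely spelled out the closure properties and the (standard) fact that a (co)limit in $\eeq$ whose objects lie in a full subcategory is a (co)limit there.
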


\begin{proof}
Trivial, since $R\times S$ and $R\amalg S$ are $\Sigma^0_n$ ($\Pi^0_n$) if
both $R,S$ are $\Sigma^0_n$ ($\Pi^0_n$).
\end{proof}

We recall that a \emph{terminal object} in a category is an object $a$ such
that for every object $b$ there exists a unique morphism $b \rightarrow a$.
Terminal objects are unique up to isomorphisms. A terminal object can be
described as an empty product.

\begin{theorem}\label{thm:terminal}
$\eeq$ has a terminal object, thus $\eeq$ has all finite products.
\end{theorem}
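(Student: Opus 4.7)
The plan is to exhibit an explicit terminal object and then invoke the standard categorical fact that binary products together with a terminal object yield all finite products.

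The natural candidate for the terminal object is $\Id_1$, the equivalence relation with exactly one equivalence class, i.e.\ $x \rel{\Id_1} y$ for all $x,y \in \omega$. Here $\omega_{/\Id_1}$ is a singleton, so for every equivalence relation $R$ there is at most one set-theoretic map $\omega_{/R} \to \omega_{/\Id_1}$, namely the constant one. To see that this map actually lies in $\eeq(R,\Id_1)$, I would observe that the computable function $f(x)=0$ is trivially $(R,\Id_1)$-equivalence preserving and that $\alpha^{R,\Id_1}(f)$ is precisely the unique constant map. Uniqueness in $\eeq(R,\Id_1)$ is immediate from the uniqueness at the set-theoretic level, so $\Id_1$ is terminal.

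For the second clause, I would recall the standard fact (MacLane) that a category with binary products and a terminal object has all finite products: the empty product is the terminal object, a unary product of $R$ is $R$ itself, binary products were provided by the previous theorem, and $n$-ary products for $n\ge 3$ are built inductively as $R_1\times(R_2\times\cdots\times R_n)$, verification of the universal property being routine.

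There is essentially no obstacle here: the only point worth double-checking is that $\Id_1$ is indeed an object of $\eeq$ (it is, being an equivalence relation on $\omega$) and that the induced morphism $\alpha^{R,\Id_1}(f)$ does not depend on the choice of the witnessing computable function $f$ (it cannot, since the target has only one element). All verifications are thus immediate, and the statement follows.
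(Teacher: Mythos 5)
Your proposal is correct and follows exactly the paper's route: the paper also takes $\Id_1$ as the terminal object (its proof is just the one-line remark that this is easy to see) and derives all finite products from the terminal object together with the binary products of the previous theorem. Your verification that the unique constant map is induced by the computable function $f(x)=0$ is precisely the detail the paper leaves implicit.
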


\begin{proof}
It is easy to see that $\Id_1$ is a terminal object.
\end{proof}

Dually, an \emph{initial object} in a category is an object $a$ such that for
every object $b$ there exists a unique morphism $a \rightarrow b$. Initial
objects are unique up to isomorphisms. An initial object can be described as
an empty coproduct.

\begin{theorem}\label{thm:initial}
$\eeq$ has no initial object, thus $\eeq$ does not have empty coproducts.
\end{theorem}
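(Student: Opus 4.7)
The plan is to show that any purported initial object $I$ admits at least two distinct morphisms into some fixed target, violating the uniqueness clause of the universal property. The natural target is $\Id_2$, which has exactly two equivalence classes $[0]_{\Id_2}$ and $[1]_{\Id_2}$, and these are genuinely distinct since $0 \not\equiv 1 \pmod 2$.

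To produce the two competing morphisms I take the constant computable functions $g_0(x) = 0$ and $g_1(x) = 1$. The crucial observation is that \emph{every} constant function is automatically $(R,S)$-equivalence preserving for any pair $R,S$, because reflexivity of $S$ forces $f(x) \rel{S} f(y)$ whenever $f(x) = f(y)$. Hence, whatever $I$ might be, both $\alpha^{I,\Id_2}(g_0)$ and $\alpha^{I,\Id_2}(g_1)$ are well-defined morphisms $I \to \Id_2$ in $\eeq$. They are distinct because on every $I$-class $[x]_I$ we have $\alpha^{I,\Id_2}(g_0)([x]_I) = [0]_{\Id_2}$ and $\alpha^{I,\Id_2}(g_1)([x]_I) = [1]_{\Id_2}$, and these two classes of $\Id_2$ are different. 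Thus $I$ fails to be initial.

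The second assertion of the theorem then follows from generalities: an empty coproduct in any category is, by the universal property read with an empty indexing set, precisely an initial object, so the nonexistence of an initial object in $\eeq$ immediately implies the nonexistence of empty coproducts. There is no real obstacle here; the only point to verify carefully is that constant functions always descend to morphisms in $\eeq$, which is immediate from the definition of $\alpha^{R,S}$.
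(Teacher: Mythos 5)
Your proof is correct and follows exactly the paper's argument: the paper also rules out any initial object by exhibiting two distinct morphisms into $\Id_2$, which are precisely the ones induced by the constant functions you describe. You merely spell out the details that the paper leaves implicit.
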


\begin{proof}
No equivalence relation $X$ can be initial, as there are two distinct
morphisms from $X$ to $\Id_2$.
\end{proof}

\begin{corollary}\label{cor:initial-terminal}
For every $n \ge 1$, $\eeq(\Sigma^0_n)$ and $\eeq(\Pi^0_n)$ have terminal
objects (and thus they have all finite products), but no initial objects.
\end{corollary}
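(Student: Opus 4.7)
The plan is to reduce the corollary to Theorems~\ref{thm:terminal} and~\ref{thm:initial} by observing that the relevant witnessing objects ($\Id_1$ for terminality, $\Id_2$ for non-initiality) are decidable and hence lie in every arithmetical layer of interest, and then invoking the fact that being a terminal or initial object is inherited by (respectively, obstructed in) any full subcategory that contains it.

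For the terminal object, first note that $\Id_1$, having a single equivalence class, is trivially both $\Sigma^0_n$ and $\Pi^0_n$ for every $n \ge 1$. Since $\eeq(\Sigma^0_n)$ and $\eeq(\Pi^0_n)$ are \emph{full} subcategories of $\eeq$, the set of morphisms $X \to \Id_1$ for any object $X$ of the subcategory is the same as in $\eeq$; by Theorem~\ref{thm:terminal} this set is a singleton. Hence $\Id_1$ remains terminal in each subcategory. Combining this with Corollary~\ref{cor:closure-products-coproducts}, which provides all nonempty finite products, we obtain all finite products (the empty product being the terminal object itself).

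For the absence of an initial object, the argument in the proof of Theorem~\ref{thm:initial} goes through verbatim once we notice that $\Id_2$ is also decidable and therefore an object of every $\eeq(\Sigma^0_n)$ and $\eeq(\Pi^0_n)$. Given any $X$ in the subcategory, the two constant computable functions $f_0 \equiv 0$ and $f_1 \equiv 1$ are $(X, \Id_2)$-equivalence preserving and, because $[0]_{\Id_2} \ne [1]_{\Id_2}$, induce two distinct morphisms $\alpha(f_0), \alpha(f_1): X \to \Id_2$. Thus $X$ cannot be initial.

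There is essentially no obstacle: the only point requiring (minimal) attention is the observation that, for full subcategories, both the existence and the uniqueness clauses in the definitions of terminal and initial objects transfer unchanged as soon as the distinguished object is present in the subcategory, which is the case here since $\Id_1$ and $\Id_2$ are decidable.
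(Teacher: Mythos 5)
Your argument is correct and is exactly the reasoning the paper leaves implicit (its proof is simply ``Immediate''): $\Id_1$ and $\Id_2$ are decidable, hence objects of every $\eeq(\Sigma^0_n)$ and $\eeq(\Pi^0_n)$, and fullness transfers terminality of $\Id_1$ and the two-distinct-morphisms obstruction to initiality verbatim from Theorems~\ref{thm:terminal} and~\ref{thm:initial}. No issues.
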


\begin{proof}
Immediate.
\end{proof}

\section{Equalizers and coequalizers}
From category theory we recall the following definition. Given two morphisms
\begin{tikzcd}
a
\ar[r,shift left=.70ex,"\alpha"]
\ar[r,shift right=.70ex,swap,"\beta"]
&
b,
\end{tikzcd}
a \emph{coequalizer of $\alpha, \beta$}, when it exists, is a pair $(c,
\gamma)$ with $\gamma: b \rightarrow c$ such that $\gamma\circ \alpha=\gamma
\circ \beta$ and for every morphism $\gamma': b \rightarrow c'$ such that
$\gamma'\circ \alpha=\gamma' \circ \beta$ there exists a unique morphism
$\gamma'': c \rightarrow c'$ so that the following diagram commutes:
\begin{center}
\[
\begin{tikzcd}
a
\ar[r,shift left=.70ex,"\alpha"]
\ar[r,shift right=.70ex,swap,"\beta"]
&
b
\ar[r,"\gamma"]
\ar[dr,swap,"\gamma' "]
&
c
\arrow[densely dotted]{d}{\gamma''}
\\
& & c'
\end{tikzcd}
\]
\end{center}

It is known from category theory that coequalizers are unique up to
isomorphisms.

\begin{theorem}\label{thm:has-coeq}
The category $\eeq$ has coequalizers.
\end{theorem}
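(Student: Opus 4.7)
The plan is to build the coequalizer of $\alpha,\beta\colon R\to S$ directly on the level of equivalence relations, by forcing the minimal identifications required by the universal property. Fix computable $f,g$ with $\alpha=\alpha(f)$ and $\beta=\alpha(g)$. I would let $T$ be the equivalence relation on $\omega$ generated by $S$ together with all pairs $(f(x),g(x))$ for $x\in\omega$; equivalently, $u\rel{T} v$ iff there is a finite chain $u=u_0,u_1,\dots,u_n=v$ where each consecutive pair either lies in $S$ or is of the form $(f(x),g(x))$ or $(g(x),f(x))$ for some $x$. I would then propose the pair $(T,\gamma)$ with $\gamma=\alpha^{S,T}(1_\omega)$ as the coequalizer. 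Since $S\subseteq T$, the identity $1_\omega$ is $(S,T)$-equivalence preserving, so $\gamma$ is a legitimate morphism.

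The equality $\gamma\circ\alpha=\gamma\circ\beta$ is immediate: both send $[x]_R$ to $[f(x)]_T=[g(x)]_T$, since the pair $(f(x),g(x))$ was thrown in by construction. For the universal property, suppose $\gamma'\colon S\to T'$ satisfies $\gamma'\circ\alpha=\gamma'\circ\beta$, and let $\gamma'=\alpha^{S,T'}(h')$ for some computable $h'$. The hypothesis unpacks to $h'(f(x))\rel{T'} h'(g(x))$ for every $x\in\omega$. I would define $\gamma''=\alpha^{T,T'}(h')$, reusing the \emph{same} computable function $h'$. The only thing to check is that $h'$ is $(T,T')$-equivalence preserving: if $u\rel{T} v$, pick a chain witnessing this, and note that on each link $h'$ either respects $S$ (because $h'$ already induces $\gamma'$) or sends the link to a $T'$-related pair by the hypothesis $\gamma'\circ\alpha=\gamma'\circ\beta$. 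Transitivity of $T'$ then gives $h'(u)\rel{T'} h'(v)$.

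Commutativity $\gamma''\circ\gamma=\gamma'$ is automatic from the choice of the same inducing function: for every $x$, $\gamma''(\gamma([x]_S))=\gamma''([x]_T)=[h'(x)]_{T'}=\gamma'([x]_S)$. For uniqueness, any $\delta\colon T\to T'$ with $\delta\circ\gamma=\gamma'$ must satisfy $\delta([x]_T)=[h'(x)]_{T'}$ for every $x$; since every equivalence class of $T$ contains some natural number, this pins $\delta$ down completely, so $\delta=\gamma''$.

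The construction is robust, so I do not expect any real obstacle; the only slightly delicate step is verifying that the chosen $h'$ remains equivalence preserving with respect to the enlarged relation $T$, which is handled by the chain argument above. Note that this construction does not automatically preserve being $\Sigma^0_n$ or $\Pi^0_n$: the closure operation is a $\Sigma^0_1$-type construction, which is exactly what later lets it keep us inside $\eeq(\Sigma^0_1)$ while potentially pushing us out of $\eeq(\Pi^0_1)$.
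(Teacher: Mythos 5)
Your proposal is correct and follows essentially the same route as the paper: generate the coequalizing relation by closing $S$ under the pairs $(f(x),g(x))$, take the morphism induced by $1_\omega$, and verify the universal property by reusing the inducing function of any competing morphism, with uniqueness following from surjectivity of $\gamma$. The chain argument you give for $(T,T')$-equivalence preservation is just a more explicit version of the paper's one-line observation, so there is nothing to add.
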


\begin{proof}
Suppose that $X,Y$ are equivalence relations, and $\alpha, \beta: X
\rightarrow Y$ are morphisms, with $\alpha=\alpha(f_1)$ and
$\beta=\alpha(f_2)$. Consider the equivalence relation $Z$ generated by the
set of pairs $Y \cup \{(f_1(x), f_2(x)): x \in \omega\}$. Then
$\gamma=\alpha^{Y,Z}(1_\omega)$ is an onto morphism $\gamma: Y \rightarrow Z$ which
is the coequalizer of $\alpha$ and $\beta$. The following diagram verifies
the defining property of coproducts for every $(Y,U)$-equivalence preserving $g$:
\begin{center}
\[
\begin{tikzcd}
X
\ar[r,shift left=.70ex,"\alpha"]
\ar[r,shift right=.70ex,swap,"\beta"]
&
Y
\ar[r,"\gamma"]
\ar[dr,swap,"\alpha^{Y,U}(g)"]
&
Z
\arrow[densely dotted]{d}{\alpha^{Z,U}(g)}
\\
& & U.
\end{tikzcd}
\]
\end{center}
We use here that $g$ is $(Z,U)$-equivalence preserving if it is
$(Y,U)$-equivalence preserving, and thus $\alpha^{Z,U}(g)$ is a morphism from $Z$
to $U$ as well.
\end{proof}

\begin{corollary}\label{cor:sigma-n-has-coeq}
For every $n \ge 1$, $\eeq(\Sigma^0_n)$ has coequalizers.
\end{corollary}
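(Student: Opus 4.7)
The plan is to lift the coequalizer constructed in Theorem~\ref{thm:has-coeq} directly to $\eeq(\Sigma^0_n)$. Given morphisms $\alpha(f_1), \alpha(f_2): X \to Y$ with $X, Y \in \Sigma^0_n$, Theorem~\ref{thm:has-coeq} exhibits the coequalizer $(Z, \alpha^{Y,Z}(1_\omega))$ in $\eeq$, where $Z$ is the equivalence relation generated by $Y \cup \{(f_1(x), f_2(x)) : x \in \omega\}$. Since $\eeq(\Sigma^0_n)$ is a \emph{full} subcategory of $\eeq$, once $Z$ itself is shown to be $\Sigma^0_n$, the required universal property transfers automatically: for any $\gamma': Y \to U$ in $\eeq(\Sigma^0_n)$ with $\gamma' \circ \alpha(f_1) = \gamma' \circ \alpha(f_2)$, the unique factoring morphism $\gamma'': Z \to U$ produced in $\eeq$ is, by fullness, already a morphism of $\eeq(\Sigma^0_n)$. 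Thus the entire task reduces to bounding the arithmetical complexity of $Z$.

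For this I would use the standard chain description of the generated equivalence relation: $x \rel{Z} y$ iff there exists a finite sequence $z_0 = x, z_1, \ldots, z_k = y$ such that for each $i < k$, either $z_i \rel{Y} z_{i+1}$, or $\{z_i, z_{i+1}\} = \{f_1(u), f_2(u)\}$ for some $u \in \omega$. Coding such chains (together with the witnesses $u$) as single natural numbers, the matrix is a disjunction of a $\Sigma^0_n$ predicate (inherited from $Y$) and a $\Sigma^0_1$ predicate (coming from the computable graphs of $f_1$ and $f_2$), and hence is itself $\Sigma^0_n$ for every $n \geq 1$. The bounded universal quantifier over $i < k$ and the outer unbounded existential quantifier over the chain code both preserve $\Sigma^0_n$, so $Z \in \Sigma^0_n$, as required.

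There is essentially no obstacle; the only point meriting mild attention is the classical closure of $\Sigma^0_n$ ($n \geq 1$) under finite disjunctions and bounded universal quantification, applied here to chains of unbounded length coded as single numbers. Note in contrast that the same argument fails for $\eeq(\Pi^0_n)$: the chain description introduces an unbounded existential quantifier that lifts a $\Pi^0_n$ relation on $Y$ only to $\Sigma^0_{n+1}$, which hints at the genuinely harder situation addressed in the next section.
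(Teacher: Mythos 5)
Your proposal is correct and follows the paper's own route: the paper likewise reuses the coequalizer $Z$ from Theorem~\ref{thm:has-coeq} and observes that $Z$ is $\Sigma^0_n$ whenever $Y$ is, the universal property transferring by fullness. You merely spell out the chain-coding complexity computation that the paper leaves implicit.
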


\begin{proof}
Immediate by the previous proof, since if $Y$ is $\Sigma^0_n$, with $n \ge
1$, then $Z$ is $\Sigma^0_n$ as well.
\end{proof}

\begin{corollary}\label{cor:every-ceer-coeq}
Every object of $\eeq(\Sigma^0_1)$ is a coequalizer of a pair of morphisms
\begin{tikzcd} \Id \ar[r,shift left=.70ex,"\alpha"] \ar[r,shift
right=.70ex,swap,"\beta"] & \Id
\end{tikzcd}.
\end{corollary}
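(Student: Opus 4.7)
The plan is to use the explicit coequalizer construction from the proof of Theorem~\ref{thm:has-coeq}. For morphisms of the form $\alpha(f_1), \alpha(f_2) : \Id \to \Id$, that proof produces as coequalizer the pair $(Z, \alpha^{\Id, Z}(1_\omega))$, where $Z$ is the equivalence relation on $\omega$ generated by $\Id \cup \{(f_1(n), f_2(n)) : n \in \omega\}$. Since $\Id$ contributes only reflexivity, $Z$ is just the equivalence relation generated by the set of pairs $\{(f_1(n), f_2(n)) : n \in \omega\}$. So given a ceer $R$, I only need to produce computable $f_1, f_2$ whose image pairs generate $R$ as an equivalence relation.

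To get such $f_1, f_2$, I exploit the definition of a ceer. The set $\{\langle a, b \rangle : a \rel{R} b\} \subseteq \omega$ is c.e.\ and nonempty (it contains $\langle 0, 0 \rangle$), so fix a computable surjection $c : \omega \to \{\langle a, b \rangle : a \rel{R} b\}$. Set $f_1(n) = (c(n))_0$ and $f_2(n) = (c(n))_1$. Both are computable, and because every function is trivially $(\Id, \Id)$-equivalence preserving, $\alpha(f_1), \alpha(f_2) : \Id \to \Id$ are genuine morphisms of $\eeq(\Sigma^0_1)$.

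By construction the set $\{(f_1(n), f_2(n)) : n \in \omega\}$ equals $R$ as a set of pairs, and since $R$ is already an equivalence relation, the equivalence relation it generates is $R$ itself. Invoking Theorem~\ref{thm:has-coeq} (together with Corollary~\ref{cor:sigma-n-has-coeq} to certify that the construction stays inside $\eeq(\Sigma^0_1)$) shows that $R$, with the quotient morphism $\alpha^{\Id, R}(1_\omega)$, is the coequalizer of $\alpha(f_1), \alpha(f_2)$. There is essentially no obstacle here: the real content is the observation that Theorem~\ref{thm:has-coeq} turns any c.e.\ list of generators into the ceer it generates, and every ceer is by definition given by such a list.
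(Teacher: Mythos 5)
Your proof is correct and is essentially the paper's own argument: both take a computable enumeration of the (nonempty, c.e.) set of codes of $R$-pairs, define $f_1,f_2$ as its two projections, and invoke the coequalizer construction of Theorem~\ref{thm:has-coeq} applied to $\alpha(f_1),\alpha(f_2):\Id\rightarrow\Id$. The extra remarks you add (nonemptiness via $\langle 0,0\rangle$, and that $R$ generates itself since it is already an equivalence relation) are harmless elaborations of the same route.
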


\begin{proof}
Let $Z$ be a ceer, and let $h$ be a computable function enumerating $Z$,
i.e.\ $u \rel{Z} v$ if and only if $\langle u,v \rangle \in \ran(h)$. Let $f_1, f_2$ be computable functions defined by $f_1(x)=(h(x))_0$ and $f_2(x)=(h(x))_1$.
Then $Z$ is the ceer generated by the pairs $\{(f_1(x), f_2(x)): x \in \omega\}$, and thus, by (the proof of) Theorem~\ref{thm:has-coeq}, is the coequalizer of the morphisms induced by $f_1$ and $f_2$.
\end{proof}

Unfortunately, the construction in the proof of Theorem~\ref{thm:has-coeq}
does not always produce $\Pi^0_n$ equivalence relations $Z$ when starting
from $\Pi^0_n$ equivalence relations $X,Y$. We show below that this is the
case even for $n=1$. We need the following preliminary lemma.

\begin{lemma}\label{lem:counter-pi1}
There exist computable functions $f_1, f_2$ (in fact \, with $f_1(x)=0$ for
every $x$) and a coceer $Y$ such that the equivalence relation $Z$ generated
by the set of pairs $Y\cup \{(f_1(x), f_2(x)): x \in \omega\}$ has exactly
two equivalence classes, at least one of which is not $\Pi^0_1$, hence $Z$ is
not $\Pi^0_1$.
\end{lemma}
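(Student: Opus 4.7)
My plan is to choose $Y$ to be a coceer whose equivalence classes consist of one distinguished $\Pi^0_1$ block together with singletons on its complement. The pairs $(0, f_2(x))$ will then be used to collapse all singletons into the class of $0$, producing a $Z$ with two classes whose structure inherits the $\Sigma^0_1$/$\Pi^0_1$ asymmetry of the original block.

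Concretely, I would fix a properly $\Pi^0_1$ set $B \subseteq \omega$ with $0 \notin B$ and $\overline B$ infinite (e.g.\ take $B = \{n+1 : n \in \overline K\}$, where $K$ is the halting set). Define $Y$ by $x \rel{Y} y$ iff $x = y$ or $x,y \in B$. The complement of $Y$ is $\{(x,y) : x \neq y \text{ and } (x \in \overline B \text{ or } y \in \overline B)\}$, which is c.e.\ because $\overline B$ is, so $Y$ is a coceer, with single nontrivial class $B$ and each element of $\overline B$ as a singleton class. Set $f_1 \equiv 0$ and let $f_2$ be any computable function whose range is $\overline B$; such an $f_2$ exists since $\overline B$ is an infinite c.e.\ set.

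Next I would compute the two classes of $Z$. By construction $[0]_Z$ is the $Y$-saturation of $\{0\} \cup \mathrm{ran}(f_2) = \{0\} \cup \overline B = \overline B$. Since each $a \in \overline B$ has $[a]_Y = \{a\}$, this saturation equals $\overline B$ itself, so $[0]_Z = \overline B$. The complement of $[0]_Z$ in $\omega$ is then $B$, which already forms a single $Y$-class and is not touched by the added pairs (as $0 \notin B$ and $\mathrm{ran}(f_2) \cap B = \emptyset$). Therefore $Z$ has exactly two equivalence classes, $\overline B$ and $B$.

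Finally, since $B$ was chosen properly $\Pi^0_1$, the class $[0]_Z = \overline B$ is c.e.\ but not co-c.e.; however, every section $\{y : (a,y) \in Z\}$ of a $\Pi^0_1$ equivalence relation is itself $\Pi^0_1$, so $Z$ cannot be $\Pi^0_1$. I do not foresee a real obstacle: the only delicate point is to make $Y$ a coceer while simultaneously ensuring that the $Y$-saturation of $\{0\}\cup\mathrm{ran}(f_2)$ is properly $\Sigma^0_1$, which is achieved by letting $\overline B$ play both roles (as the complement of the nontrivial $Y$-block and as the range of $f_2$).
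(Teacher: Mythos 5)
Your argument is correct, and all the key verifications go through: $Y$ is a coceer because its complement $\{(x,y): x\neq y \text{ and } (x\in\overline{B} \text{ or } y\in\overline{B})\}$ is c.e.; the added pairs $(0,f_2(x))$ link $0$ to exactly the $Y$-singletons, i.e.\ the elements of $\overline{B}$ (which contains $0$), while no generating pair touches $B$, so $Z$ has precisely the two classes $\overline{B}$ and $B$; and $[0]_Z=\overline{B}$ is properly $\Sigma^0_1$, hence not $\Pi^0_1$, and since each section $\{y: a \rel{Z} y\}$ of a $\Pi^0_1$ relation is $\Pi^0_1$, $Z$ is not a coceer. Your route is genuinely different from the paper's: there, $Y$ is built by an explicit stage-by-stage construction that diagonalizes against a uniform approximation $\{V_{e,s}\}$ to all $\Pi^0_1$ sets, isolating $e+2$ from the big class whenever $e+2$ leaves $V_e$, so as to force $[0]_Z\neq V_e$ for every $e$ directly. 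You instead push all the recursion-theoretic content into the single standard fact that a properly $\Pi^0_1$ set $B$ exists, after which $Y$, $f_1$, $f_2$ are defined statically and the two classes of $Z$ are visible from the outset as $B$ and $\overline{B}$. The two constructions produce essentially the same object --- one class c.e.\ but not co-c.e., the other co-c.e.\ --- but yours buys brevity and transparency at no cost in generality, whereas the paper's dynamic construction in effect rebuilds a properly $\Pi^0_1$ set by hand inside the definition of $Y$.
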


\begin{proof}
We construct in stages a coceer $Y$ and a c.e. set $U$. At stage $s$ we build
an equivalence relation $Y_{s}$ and a finite set $U_s$ such that $\{Y_s: s\in
\omega\}$ and $\{U_s: s \in \omega\}$ are a computable approximation to $Y$
(that is, $\{Y_s: s\in \omega\}$ is a computable sequence with $Y_s\supseteq
Y_{s+1}$, and $Y=\bigcap_s Y_s$) and an approximation to $U$ (that is, in
this case, $\{U_s: s\in \omega\}$ is a computable sequence of computable sets
$U_s\subseteq U_{s+1}$, and $U=\bigcup_s U_s$). We work with computable
approximations $\{V_{e,s}: e, s \in \omega\}$ to the $\Pi^0_1$ sets (meaning that
the predicate ``$x \in V_{e,s}$'' is decidable in $e,x,s$,
$V_{e,s}\supseteq V_{e,s+1}$ and $V_e=\bigcap_s V_e,s$, for all $e,s$).

\medskip
\noindent\emph{Stage $0$}. Let $Y_0$ be the set of pairs corresponding to the
equivalence relation consisting of the two equivalence classes $\{0\}$ and
$\omega\smallsetminus \{0\}$; let $U_0=\emptyset$.

\medskip
\noindent\emph{Stage $s+1$}. Extract from $Y_s$ all pairs $\langle e+2, y
\rangle$ such that $y \ne e+2$ and $e+2 \in V_{e,s}\smallsetminus V_{e,s+1}$:
we say in this case that we \emph{$Y$-isolate $e+2$ at $s+1$}. Let $Y_{s+1}$
be the remaining set of pairs: notice that, looking at equivalence classes,
$Y_{s+1}$ looks like $Y_s$ but having a computable set of additional
singletons, namely all those $\{e+2\}$ which have been $Y$-isolated at $s+1$.
Add to $U_s$ all numbers $e+2$ which have been $Y$-isolated at $s+1$.

\medskip This ends the construction.

\medskip

Define $f_1(x)=0$, and let $f_2$ be any computable function such that
$\ran(f_2)=U$. Finally, let $Z$ be the equivalence relation generated by $Y$
and the set of pairs $\{(f_1(x), f_2(x)): x \in \omega\}$, that is by $Y$ and
the pairs $\{\langle 0, e+2\rangle\}$ so that $e+2$ has been $Y$-isolated at
some stage. We now check that the construction works. The sequences $\{Y_s:
s\in \omega\}$ and $\{U_s: s \in \omega\}$ are indeed computable sequences so
that $Y=\bigcap_s Y_s$ is a coceer and $U$ is a c.e. set. The singleton
$\{e+2\}$ is an equivalence class of $Y$ if and only if $e+2 \notin V_e$.
Thus $[e+2]_Y \cap ([0]_Y\cup [1]_Y)=\emptyset$ if and only if $e+2 \notin
V_e$. Thus $0 \mathrel{Z} (e+2)$ if and only if $e+2 \notin V_e$, hence
$[0]_Z \ne V_e$ for every $e$. All numbers $x$ different from $0$ and from
those $e+2$ which have been $Y$-isolated at some stage, are eventually
$Y$-equivalent to $1$ as they were so at stage $0$, and therefore $x
\mathrel{Z} 1$ as the construction does not ask to involve these numbers in
any extraction or to merge their classes to other classes at any stage bigger
than $0$. Therefore $Z$ has only two equivalence classes, and the equivalence
class $[0]_Z$ is not $\Pi^0_1$, hence $Z$ is not $\Pi^0_1$.
\end{proof}

\begin{corollary}\label{cor:properly-sigma2}
There are morphisms
\begin{tikzcd}
\Id \ar[r,shift left=.70ex,"\alpha"] \ar[r,shift right=.70ex,swap,"\beta"] &
Y
\end{tikzcd}
in $\eeq(\Pi^0_1)$ such that their coequalizer in $\eeq$ is properly
$\Sigma^0_2$.
\end{corollary}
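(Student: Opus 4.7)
The plan is to let Lemma~\ref{lem:counter-pi1} do the heavy lifting and read off the answer via Theorem~\ref{thm:has-coeq}. Take $f_1, f_2$ and the coceer $Y$ as supplied by the lemma, and set $\alpha = \alpha(f_1)$ and $\beta = \alpha(f_2)$. Since the $\Id$-classes are singletons, every computable function is vacuously $(\Id, Y)$-equivalence preserving, so $\alpha, \beta \colon \Id \to Y$ are legitimate morphisms in $\eeq(\Pi^0_1)$. The construction in the proof of Theorem~\ref{thm:has-coeq} then identifies their coequalizer in $\eeq$ with the equivalence relation $Z$ generated by $Y \cup \{(f_1(x), f_2(x)) : x \in \omega\}$---exactly the $Z$ produced by the lemma.

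What remains is a short complexity computation to justify the word ``properly''. The lemma already supplies $Z \notin \Pi^0_1$. For the $\Sigma^0_2$ upper bound, I would inspect the construction: $[0]_Z$ consists of $0$ together with the c.e.\ set $U$ of numbers $Y$-isolated at some stage (all other numbers eventually remain $Y$-equivalent to $1$), so $[0]_Z \in \Sigma^0_1$ and $[1]_Z = \overline{[0]_Z} \in \Pi^0_1$. Since $Z$ has only these two classes, $Z = ([0]_Z \times [0]_Z) \cup ([1]_Z \times [1]_Z)$ is the union of a $\Sigma^0_1$ set and a $\Pi^0_1$ set, hence $\Sigma^0_2$. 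To also rule out $\Sigma^0_1$, I would argue that if $Z$ were c.e.\ then $[1]_Z = \{y : (1,y) \in Z\}$ would be c.e.\ too, and as the complement of the c.e.\ set $[0]_Z$ both classes would then be decidable---contradicting $[0]_Z \notin \Pi^0_1$. So $Z \in \Sigma^0_2 \setminus (\Sigma^0_1 \cup \Pi^0_1)$.

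The only real obstacle has already been dispatched inside Lemma~\ref{lem:counter-pi1}: arranging a coceer whose collapse under identifying $0$ with the c.e.-many $Y$-isolated points destroys the $\Pi^0_1$-ness of the merged class via a diagonal against the $\Pi^0_1$ approximations. Granted the lemma, the corollary is pure bookkeeping built out of Theorem~\ref{thm:has-coeq} plus the two-line arithmetical computation above.
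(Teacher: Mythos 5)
Your proof is correct and takes essentially the same route as the paper: pull $f_1,f_2,Y,Z$ from Lemma~\ref{lem:counter-pi1}, note that every computable function induces a morphism out of $\Id$, and identify the coequalizer with the $Z$ of the lemma via the construction in Theorem~\ref{thm:has-coeq}. Your complexity bookkeeping is in fact a bit more thorough than the paper's, which only records $Z\in\Sigma^0_2$ and $Z\notin\Pi^0_1$; your extra step ruling out $Z\in\Sigma^0_1$ (and your decomposition $Z=([0]_Z\times[0]_Z)\cup([1]_Z\times[1]_Z)$, which even shows $Z\in\Delta^0_2$) gives a more honest justification of the word ``properly''.
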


\begin{proof}
Let $f_1, f_2, Y, Z$ be as in the previous lemma. Then, by the construction
of Theorem~\ref{thm:has-coeq}, the identity $1_\omega$ induces a morphism
$\gamma=\alpha(1_\omega)$, $\gamma: Y \rightarrow Z$ which is a coequalizer
of $\alpha, \beta: \Id \rightarrow Y$, where $\alpha=\alpha(f_1)$ and
$\beta=\alpha(f_2)$ (notice that for every equivalence relation $R$ and any
computable function $f$, we have that $f$ induces a morphism from $\Id$ to
$R$) in the category $\eeq(\Sigma^0_2)$. Clearly $Z \in \Sigma^0_2$, but, as
already observed, $Z \notin \Pi^0_1$.
\end{proof}

The above observation shows that $\eeq(\Pi^0_1)$ is not closed under coequalizers. Unfortunately  it cannot be used to conclude that $\eeq(\Pi^0_1)$ does not have coequalizers. So we raise the following question.

\begin{question}
Does $\eeq(\Pi^0_1)$ have coequalizers?
\end{question}

\medskip

The dual notion of a coequalizers is that of an \emph{equalizer}: Given two
morphisms
\begin{tikzcd}
a
\ar[r,shift left=.70ex,"\alpha"]
\ar[r,shift right=.70ex,swap,"\beta"]
&
b
\end{tikzcd}
an \emph{equalizer of $\alpha, \beta$}, when it exists, is a pair $(c,
\gamma)$ with $\gamma: c \rightarrow a$ such that $\alpha \circ \gamma=\beta
\circ \gamma$ and for every morphism $\gamma': c' \rightarrow a$ such that
$\alpha \circ \gamma'=\beta \circ \gamma'$ there exists a unique morphism
$\gamma'': c' \rightarrow c$ so that the following diagram commutes:

\begin{center}
\[
\begin{tikzcd}
c \arrow{r}{\gamma}
&
a
\arrow[shift left]{r}{\alpha}
\arrow[shift right, swap]{r}{\beta}
& b\\
c'
\arrow[swap]{ur}{\gamma'}
\arrow[densely dotted]{u}{\gamma''}.
\end{tikzcd}
\]
\end{center}
As to equalizers, the situation is much simpler than for coequalizers, as follows from the following observation.

\begin{remark}
The pair of morphisms $\alpha(f_1), \alpha(f_2): \Id \rightarrow \Id$ induced by the computable functions $f_0, f_1: \Id_1 \rightarrow \Id_2$, with $f_0(x)=0$ and $f_1(x)=1$ have no equalizer.
\end{remark}

\begin{proof}
Trivial, since for no $x$ we have $f_0(x) \mathrel{\Id_2} f_1(x)$.
\end{proof}

\section{Subcategories of $\eeq(\Sigma^0_1)$ and closure under binary
coproducts and coequalizers}

Binary coproducts and binary coequalizers can be used to build more complex objects in a category, at least those finite colimits that can be built without an initial object. Let us recall that a category has all finite colimits if and only if it has coequalizers (which are special colimits) and finite coproducts (including an initial object, which is not available in $\eeq$).

\begin{corollary}
For every $n \ge 1$, the category $\eeq(\Sigma^0_n)$ is closed under coequalizers and nonempty finite coproducts, although it does not have an initial object.
\end{corollary}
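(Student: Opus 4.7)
The plan is to observe that this corollary is purely a restatement assembling three results already established earlier in the paper. Specifically, closure of $\eeq(\Sigma^0_n)$ under coequalizers is exactly the content of Corollary~\ref{cor:sigma-n-has-coeq}, while closure under nonempty finite coproducts is part of Corollary~\ref{cor:closure-products-coproducts}, and the absence of an initial object is part of Corollary~\ref{cor:initial-terminal}. So no new construction is required; the only content here is packaging these facts together in the form that makes transparent the comparison with the standard notion of ``finitely cocomplete.''

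The first step I would take is to cite Corollary~\ref{cor:sigma-n-has-coeq} for the coequalizer clause, noting that its proof directly specializes Theorem~\ref{thm:has-coeq}: the equivalence relation $Z$ generated by $Y$ together with the pairs $\{(f_1(x),f_2(x)):x\in\omega\}$ is $\Sigma^0_n$ whenever $Y$ is $\Sigma^0_n$ (with $n\ge 1$), because the generated equivalence is the transitive closure of a $\Sigma^0_n$ relation, and for $\Sigma^0_n$ classes with $n\ge 1$ such closures remain $\Sigma^0_n$. Second, I would invoke Corollary~\ref{cor:closure-products-coproducts} to obtain the binary coproduct $R\oplus S$, which is $\Sigma^0_n$ if $R,S$ are $\Sigma^0_n$; iterating then yields all nonempty finite coproducts. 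Finally, the remark about absence of an initial object follows immediately from Theorem~\ref{thm:initial} and Corollary~\ref{cor:initial-terminal}: for any candidate initial $X\in\eeq(\Sigma^0_n)$ there are already two distinct morphisms $X\to\Id_2$, and $\Id_2\in\eeq(\Sigma^0_n)$.

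There is no real obstacle here, since every ingredient has been proved. The only thing one might worry about is a missing category-theoretic convention — namely whether the statement ``closed under coequalizers and nonempty finite coproducts'' requires anything beyond what we have verified. Since in our categories finite coproducts are built iteratively from binary coproducts, and since coequalizers were verified for arbitrary pairs of parallel $\eeq(\Sigma^0_n)$-morphisms, nothing further is needed. Hence the proof reduces to a one-line assembly of the cited corollaries.

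\begin{proof}
The three clauses are immediate: closure under coequalizers is Corollary~\ref{cor:sigma-n-has-coeq}; closure under nonempty finite coproducts follows from Corollary~\ref{cor:closure-products-coproducts} by iterating the binary coproduct; the lack of an initial object is the content of Corollary~\ref{cor:initial-terminal}.
\end{proof}
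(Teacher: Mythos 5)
Your proposal is correct and is essentially identical to the paper's own proof, which likewise just assembles Corollary~\ref{cor:closure-products-coproducts}, Corollary~\ref{cor:sigma-n-has-coeq}, and the non-existence of an initial object (Theorem~\ref{thm:initial} / Corollary~\ref{cor:initial-terminal}). No further comment is needed.
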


\begin{proof}
From Corollary~\ref{cor:closure-products-coproducts},
Corollary~\ref{cor:sigma-n-has-coeq}, and Corollary~\ref{thm:initial}.
\end{proof}

Andrews and Sorbi~\cite{joinmeet} have proposed a partition of the ceers into
the three classes $\mathcal{F}$, $\light$, $\dark$, where $\mathcal{F}$ is
comprised of the \emph{finite} ceers, i.e. the ceers with only finitely many
equivalence classes; $\light$ is comprised of the \emph{light} ceers, i.e.
the ceers $R$ such that $\Id \leq R$, where we recall that $\Id$ denotes the
identity ceer; $\dark$ is comprised of the \emph{dark} ceers, i.e. the ceers
which are neither finite nor light. These classes have been extensively
investigated in relation to the existence or non-existence of joins and meets
in the poset of degrees of ceers under the reducibility mentioned in
Corollary~\ref{cor:injective}: for instance no pair of incomparable degrees
of dark ceers has join or meet. It is easy to see that these classes are
closed under isomorphisms (in the category-theoretic sense). The classes of
degrees corresponding to the classes of the above partition are first order
definable within the poset of degrees of ceers, under the already mentioned
reducibility, in the language of partial orders.

It might be of some interest to know whether $\dark$ or $\light$ allow for
the constructions corresponding to finite colimits in category theory.
Corollary~\ref{cor:every-ceer-coeq} excludes that the proof of
Theorem~\ref{thm:has-coeq} may yield that $\eeq(\light)$ (or even
$\eeq(\light\cup \mathrm{F})$) has coequalizers. However, let
$\dark^*=\dark\cup \mathcal{F}$. Then

\begin{corollary}
The category $\eeq(\dark^*)$ is closed under coequalizers and nonempty finite
coproducts.
\end{corollary}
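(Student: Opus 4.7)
The plan is to re-express membership in $\dark^*$ in a form that interacts cleanly with the constructions of coproducts and coequalizers. Since a ceer $R$ is light precisely when $\Id \leq R$, and finite ceers automatically satisfy $\Id \not\leq R$ (as $\Id$ has infinitely many classes), one has the characterization: $R \in \dark^*$ iff $R$ is a ceer and $\Id \not\leq R$. All work then reduces to showing that binary coproducts and coequalizers, already known to be ceers by Corollary~\ref{cor:closure-products-coproducts} and Corollary~\ref{cor:sigma-n-has-coeq}, preserve the property $\Id \not\leq (-)$.

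For nonempty finite coproducts it suffices to handle the binary case $R \oplus S$, arguing contrapositively. Suppose $\Id \leq R \oplus S$ via a computable $h$. The set $E = \{n : h(n) \text{ is even}\}$ is decidable, and together with its complement partitions $\omega$, so at least one of the two is infinite. If $E$ is infinite, enumerate it computably in increasing order as $n_0 < n_1 < \cdots$ and define $g$ by $h(n_i) = 2g(i)$; then for $i \neq j$ one has $h(n_i) \not\mathrel{R \oplus S} h(n_j)$, which forces $g(i) \not\mathrel{R} g(j)$, so $g$ witnesses $\Id \leq R$. The odd case is symmetric and yields $\Id \leq S$. Either outcome contradicts $R, S \in \dark^*$.

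For coequalizers, let $Y \in \dark^*$ and let $\alpha=\alpha(f_1), \beta=\alpha(f_2): X \to Y$ be morphisms in $\eeq(\dark^*)$. By the proof of Theorem~\ref{thm:has-coeq} the coequalizer in $\eeq$ is $\gamma = \alpha^{Y,Z}(1_\omega): Y \to Z$, where $Z$ is the equivalence relation generated by $Y \cup \{(f_1(x), f_2(x)) : x \in \omega\}$, and $Z$ is a ceer by Corollary~\ref{cor:sigma-n-has-coeq}. The key observation is that $Y \subseteq Z$ as sets of pairs, so any computable $h$ witnessing $\Id \leq Z$ (i.e.\ $i \neq j \Rightarrow h(i) \not\mathrel{Z} h(j)$) automatically satisfies $h(i) \not\mathrel{Y} h(j)$ for $i \neq j$, hence witnesses $\Id \leq Y$. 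This contradicts $Y \in \dark^*$, so $Z \in \dark^*$. Since $\eeq(\dark^*)$ is a full subcategory of $\eeq$ and $Z$ lies in it, the universal property of $(Z, \gamma)$ in $\eeq$ restricts verbatim to $\eeq(\dark^*)$.

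The overall argument is essentially two short reductions rather than an intricate construction; if any step deserves attention, it is the parity split in the coproduct case, where one must verify that the function $g$ extracted from one side of the partition is genuinely a computable reduction from $\Id$ to $R$ (or $S$). Once this is in hand, the coequalizer half is immediate from $Y \subseteq Z$.
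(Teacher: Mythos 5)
Your proof is correct, and the coequalizer half is exactly the paper's argument: the construction of Theorem~\ref{thm:has-coeq} gives $Y\subseteq Z$ as sets of pairs, so any reduction witnessing $\Id\leq Z$ would already witness $\Id\leq Y$. Where you genuinely diverge is in the coproduct half and in making the organizing principle explicit. The paper handles binary coproducts by citing \cite{joinmeet} for three separate closure facts ($\dark$ closed under uniform joins, $\mathcal{F}$ closed under uniform joins, dark $\oplus$ finite is dark), whereas you first observe the clean characterization $R\in\dark^*$ iff $R$ is a ceer with $\Id\nleq R$ (correct, since a finite ceer cannot receive a reduction from $\Id$), and then prove closure of this property under $\oplus$ directly: from a reduction $h\colon\Id\leq R\oplus S$ you extract, via the decidable parity set $E=\{n: h(n)\text{ even}\}$ and the increasing enumeration of whichever of $E$, $\omega\smallsetminus E$ is infinite, a computable reduction $\Id\leq R$ or $\Id\leq S$. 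This buys a self-contained, uniform treatment of both halves of the corollary from a single invariant, at the cost of redoing a small piece of the Andrews--Sorbi machinery; the paper's version is shorter but outsources the coproduct case entirely. Your closing remark that the universal property of the coequalizer restricts verbatim to the full subcategory is a point the paper leaves implicit, and it is worth stating.
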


\begin{proof}
First of all, $\eeq(\dark^*)$ is closed under binary coproducts, as it is
easy to see (see~\cite{joinmeet}) that $\dark$ is closed under uniform joins,
$\mathcal{F}$ is closed under uniform joins, and the uniform join of a dark
ceer and a finite ceer is still dark.

Moreover, every coequalizer $Z$ of a diagram
\begin{tikzcd} X \ar[r,shift
left=.70ex,"\alpha"] \ar[r,shift right=.70ex,swap,"\beta"] & Y
\end{tikzcd}
where $Y$ is dark or finite, is still dark or finite, since building $Z$ as
in the proof of Theorem~\ref{thm:has-coeq} makes $Y\subseteq Z$, and thus if
$\Id \nleq Y$ then $\Id \nleq Z$.
\end{proof}

\begin{remark}
It might be the case to observe that it is necessary to include the finite
ceers in the previous corollary, since the coequalizer $Z$ built in the proof
of Corollary~\ref{cor:sigma-n-has-coeq} starting from two dark ceers might be
finite: consider for instance the pair of morphisms
\begin{center}
\begin{tikzcd}
X \ar[r,shift left=.70ex,"\alpha(\textrm{ev})"] \ar[r,shift
right=.70ex,swap,"\alpha(\textrm{odd})"] & X \oplus \Id_1
\end{tikzcd}
\end{center}
where $X$ is dark (hence $x \oplus \Id_1$ is dark too~\cite{joinmeet}),
$\textrm{ev}(x)=2x$ and $\textrm{odd}$ is the function taking the constant
value $1$: then, as follows from the proof of Theorem~\ref{thm:has-coeq} (or
its ``local'' version Corollary~\ref{cor:sigma-n-has-coeq}) these two
morphisms have coequalizer $\gamma: X \oplus \Id_1 \rightarrow \Id_1$. Thus
we see that the coequalizer is the finite ceer consisting of only one class.
\end{remark}

We see from the proof of Theorem~\ref{thm:epi=onto} that in $\eeq(\light)$
epimorphisms coincide with the onto morphisms, as the proof makes use of
precomplete ceers which are known to be light (in fact, see
\cite{Bernardi-Sorbi:Classifying}, every ceer, hence $\Id$ as well, is
reducible to any nontrivial precomplete ceer). It is therefore natural to ask
the following question:

\begin{question}
Do epimorphisms coincide with the onto morphisms in the category
$\eeq(\dark)$?
\end{question}

\end{document}